\documentclass[11pt, a4paper, leqno]{amsart}
\usepackage{amsmath}
\usepackage{amssymb}
\usepackage{mathrsfs}
\usepackage{amsthm}  
\usepackage{microtype} 
\usepackage{enumitem}
\usepackage{xcolor}
\usepackage{microtype}
\usepackage{comment}
\usepackage{mathtools}
\usepackage[hidelinks]{hyperref}

\theoremstyle{definition}
\newtheorem{definition}{Definition}[section]
\newtheorem{theorem}[definition]{Theorem}
\newtheorem{corollary}[definition]{Corollary}
\newtheorem{lemma}[definition]{Lemma}
\newtheorem{proposition}[definition]{Proposition}
\newtheorem{question}[definition]{Question}
\newtheorem{fact}[definition]{Fact}

\newcommand{\restatethm}[2]{%
	\par\noindent
	\trivlist
	\item[\hskip \labelsep {\bfseries Theorem \ref{#1}.}]
	\normalfont 
	#2%
	\endtrivlist
}

\newcommand{\restatecor}[2]{%
	\par\noindent
	\trivlist
	\item[\hskip \labelsep {\bfseries Corollary \ref{#1}.}]
	\normalfont 
	#2%
	\endtrivlist
}

\newcommand{\restatelem}[2]{%
	\par\noindent
	\trivlist
	\item[\hskip \labelsep {\bfseries Lemma \ref{#1}.}]
	\normalfont 
	#2%
	\endtrivlist
}

\newcommand{\cL}{\mathcal L}

\newcommand{\sln}{\mathcal L^{\text{s},n}}

\newcommand{\crit}{\text{crit}}

\newcommand{\ZFC}{\text{ZFC}}

\newcommand{\HOD}{\text{HOD}}

\newcommand{\rk}{\text{rk}}

\newcommand{\Top}{\text{Top}}

\begin{document}
\title{Compactness beyond choice and HOD}
\author{Jonathan Osinski}
\address{The Czech Academy of Sciences, Institute of Computer Science}
\email{osinski@cs.cas.cz}
\date{\today}
\thanks{2020 \emph{Mathematics Subject Classification}. (Primary) 03E55; (Secondary) 03C55, 03C95, 03E75.}
\thanks{\emph{Keywords and phrases}. Choiceless large cardinals, exacting cardinals, compactness, strong logics.}
\thanks{Work on this paper was supported by the Czech Science Foundation grant 25-16489S}
\thanks{I would like to thank Joan Bagaria for feedback on an earlier draft of this manuscript}
\begin{abstract}
	We characterise exacting, rank-Berkeley, Berkeley, and club Berkeley cardinals by compactness properties of logics and, equivalently, by compactness properties of certain topological spaces. This raises the strength known to be obtainable by statements about compactness into the realm of choiceless large cardinal axioms, the strongest known statements in terms of consistency strength. Moreover, it shows that compactness assumptions can directly violate the axiom of choice and the axiom $V=\HOD$.
\end{abstract}
\maketitle
\tableofcontents
	
	\section{Introduction}
	\noindent \emph{Compactness} is a major phenomenon across mathematical logic. The \emph{Compactness Theorem}, stating that any set of first-order sentences is satisfiable if and only if all its finite subsets are satisfiable, is perhaps the most important tool in model theory. In set theory, the existence of \emph{large cardinals} is intricately connected to compactness phenomena for extensions of first-order logics. Large cardinals play a major role for the foundations of mathematics. Axioms stating the existence of large cardinals are the yardstick for measuring the strength of strong mathematical assumptions. Moreover, assuming their existence can have consequences for other well-studied axioms, e.g., the axiom of choice and the axiom $V = \HOD$. In this article, we show that \emph{exacting}, \emph{rank-Berkeley}, \emph{Berkeley}, and \emph{club Berkeley} cardinals have characterisations by certain compactness properties of logics and, equivalently, by $[\kappa,\kappa]$-compactness, first studied by Alexandroff and Urysohn, of certain topological spaces (see Section \ref{sec:overview} for a statement of the results). This achieves three goals. First, it pushes up the known consistency strength achievable through compactness assertions into the realm of \emph{choiceless} large cardinals, and thus into the highest known regions of the consistency strength hierarchy. Second, it shows that certain compactness phenomena can lead to failures of the axiom of choice and of $V = \HOD$. And third, it establishes analogies between exacting and choiceless large cardinals on the one hand, and lower regions of the large cardinal hierarchy on the other, which also admit characterisations in terms of compactness. In this way, it connects these upper regions of the large cardinal hierarchy with compactness as studied in model theory and topology. To give some background which motivates our results, we discuss some of the relevance of these three goal  for ongoing discussions on the foundations of mathematics.
	
	\subsection{Compactness, consistency strength, and choiceless large cardinals}
	Large cardinals play a technical role in assessing the strength of mathematical theories. Due to Gödel's Second Incompleteness Theorem, foundational mathematical theories like ZFC cannot prove their own consistency. We thus order them by \emph{consistency strength}, where a theory $T$ is stronger than a theory $S$, if $T$ proves the consistency of $S$. Large cardinals are cofinal in the known parts of the consistency strength hierarchy and thus are used to measure a theory's strength. Kunen \cite{kunen1971elementary} showed that the existence of a non-trivial elementary embedding $j: V_{\lambda + 2} \to V_{\lambda + 2}$ for any ordinal $\lambda$ is inconsistent with $\ZFC$ (here $V_{\lambda + 2}$ is the collection of all sets of rank ${<} \lambda +2$; an embedding is \emph{elementary} if it preserves first-order truth). Among the strongest known axioms compatible with choice are the \emph{rank-into-rank embeddings} called I0--I3 in descending strength (cf. \cite{dimonte2018i0} for an overview). Note that Schlutzenberg showed equiconsistency between ZF plus the existence of an elementary $j: V_{\lambda + 2} \to V_{\lambda +2}$ and ZFC plus the existence of an I0 embedding (cf. \cite{schlutzenberg2025consistency}).
	
	Exceeding I0 in terms of consistency strength are the \emph{choiceless} large cardinal axioms which all imply the existence of Kunen's embedding inconsistent with AC (cf. \cite{bagaria2019large} for an overview). In particular, they thus contradict the axiom of choice. One example are \emph{Berkeley} cardinals. They were introduced by Woodin over 30 years ago in attempt to explore axioms possibly inconsistent with ZF, but to this day, no such inconsistency has arisen.\footnote{This origin story of Berkeley cardinals was reported in \cite{mohammd2026berkeley}.} \emph{Rank-Berkeley} cardinals were introduced by Schlutzenberg as a weakening of Berkeley cardinals. \emph{Club Berkeley} cardinals are a strengthening of Berkeleyness. In Sections \ref{sec:rank-Berkeley} and \ref{sec:Berkeley}, we characterise these choiceless cardinals by model-theoretic compactness properties. In Section \ref{sec:topology}, we derive topological compactness characterisations from this.
	
	From the point of view of the study of compactness, finding characterisations of cardinals with consistency strength as high as choiceless cardinals is conceptually interesting in the following way. Large cardinals are naturally connected to compactness properties of logics. A classical result of Magidor \cite{mag1971} shows that the smallest \emph{extendible cardinal} is the smallest cardinal $\kappa$ such that second-order logic $\cL^2$ is \emph{$\kappa$-compact}, i.e., the smallest extendible $\kappa$ is the smallest cardinal such that any $\cL^2$-theory is satisfiable if and only if all its ${<}\kappa$-sized subsets are satisfiable. A major dividing line for compactness principles in the large cardinal hierarchy is \emph{Vopěnka's Principle} (VP), which is an upper bound for the $\kappa$-compactness of strong logics:
	\begin{fact}[Makowsky \cite{mak1985}]\label{fact:makowsky}
		VP holds if and only if for every logic $\cL$ there is a cardinal $\kappa$ such that $\cL$ is $\kappa$-compact.
	\end{fact}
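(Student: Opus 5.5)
We prove the two directions separately. For the forward direction we use Vopěnka's Principle in its standard equivalent forms (for classes of structures, or for $C^{(n)}$-extendible cardinals). For the converse we build a single logic tailored to code a Vopěnka-failure. Throughout, a \emph{logic} is taken in Makowsky's sense. Each logic $\cL$ is set-like: it has an occurrence number and is $\Sigma_m$-definable, for some $m$, from some parameter $p$. So $\cL$ is determined by a definable class relation $\models_\cL$ between structures and sentences, and $\cL$-sentences of a given vocabulary form a set.

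\textbf{VP implies compactness.} Assume VP and fix $\cL$, definable by a $\Sigma_m$ formula with parameter $p$. By Bagaria's characterisation, VP is equivalent to the existence of a proper class of $C^{(n)}$-extendible cardinals for every $n$. Pick $\kappa > \rk(p)$ that is $C^{(n)}$-extendible for some $n$ large enough that $\models_\cL$ is absolute between $V_\alpha$ and $V$ for all $\alpha \in C^{(n)}$.

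Let $T$ be an $\cL$-theory, say of size $\lambda$, all of whose ${<}\kappa$-sized subsets are satisfiable. Choose $\alpha \in C^{(n)}$ above $\lambda$ and the rank of $T$. Take an elementary $j : V_\alpha \to V_\beta$ with $\crit(j) = \kappa$, $j(\kappa) > \alpha$ and $\beta \in C^{(n)}$.

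In $V_\beta$, the set $j``T$ is a subset of $j(T)$ of size $\lambda < j(\kappa)$. By elementarity, $V_\beta$ thinks every ${<}j(\kappa)$-sized subset of $j(T)$ is satisfiable, so $j``T$ has a model $M$ in $V_\beta$. By absoluteness, $M \models_\cL j``T$ truly holds.

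Now pull back along $j$. Since $j$ fixes $p$ and maps the symbols of $T$ into those of $j(T)$, the isomorphism (renaming) and occurrence properties of $\cL$ let us reduct and rename $M$ to a model of $T$: each $\varphi \in T$ and its image $j(\varphi)$ are related by the symbol map.

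This is the step needing care. It is the same trick as in Magidor's extendible argument, and it works because $j(\varphi)$ is the image of $\varphi$ under the vocabulary map $j \restriction \tau$. That holds because syntax in $\cL$ is $\Delta_1$-coded from the vocabulary. Hence $T$ is satisfiable, and $\cL$ is $\kappa$-compact.

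\textbf{Compactness implies VP.} Suppose VP fails. Then there is a proper class $\mathcal{C}$ of structures of one vocabulary with no elementary embedding between distinct members; by the usual rigidity refinement we may take it definable, with parameter $q$. Define a logic $\cL_{\mathcal C}$ extending first-order logic by a sentence $\psi$ that holds exactly in structures isomorphic to members of $\mathcal C$, or more robustly a Lindström quantifier expressing ``$(A,\dots)$ is isomorphic to some $C \in \mathcal C$''.

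Given any $\kappa$, take $C \in \mathcal C$ of size and rank above $\kappa$, with distinct $C' \in \mathcal C$ of larger rank. Form the theory consisting of:
\begin{itemize}
\item the full elementary diagram of $C$,
\item $\psi$ asserting the structure lies in $\mathcal C$,
\item ``not isomorphic to $C$'' (expressible via $\psi$ relativised, or by adding the constant-diagram together with the sentence that the model is not $C$ itself).
\end{itemize}
Every ${<}\kappa$ subset is satisfiable in a suitable $C'$, after adjusting the construction so that small fragments of the diagram embed; this is where one arranges $\mathcal C$, for example a class of $(V_\alpha,\in,\alpha)$, so that small parts reflect. Any model of the full theory, however, yields an elementary embedding of $C$ into a distinct member of $\mathcal C$, which contradicts rigidity. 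So $\cL_{\mathcal C}$ is not $\kappa$-compact for any $\kappa$.

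The main obstacle is designing $\mathcal C$ and the theory so that the ${<}\kappa$-satisfiability genuinely holds. I would use Makowsky's original construction: take $\mathcal C = \{(V_{\alpha},\in)\}$ for $\alpha$ in a thin class on which VP fails, and use a logic with a quantifier recognising well-foundedness plus membership in the class.
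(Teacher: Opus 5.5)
The paper does not prove this statement; it quotes it from Makowsky. I am therefore assessing your argument on its own.

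\textbf{Forward direction.} This is the standard $C^{(n)}$-extendibility argument and is fine in outline. However, the justification ``syntax in $\cL$ is $\Delta_1$-coded from the vocabulary'' is not available for an abstract logic, since $\cL[\tau]$ is an arbitrary definable set. The pull-back step should go through the occurrence and renaming properties you mention:
\begin{enumerate}
\item With $\kappa$ above the occurrence number and $V_\kappa$ sufficiently correct, each $\varphi\in T$ is equivalent, up to a renaming $f\colon\sigma\to\tau$, to some $\varphi'\in\cL[\sigma]$ with $\sigma,\varphi'\in V_\kappa$.
\item Then $j(\varphi')=\varphi'$ and $j(f)=j\circ f$.
\item Correctness of $V_\alpha$ and $V_\beta$ transfers ``$\varphi$ is equivalent to $\varphi'$ up to $f$'' to ``$j(\varphi)$ is equivalent to $\varphi'$ up to $j\circ f$''.
\end{enumerate}
This is what lets you pull $M$ back along $j\upharpoonright\tau$. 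You should also justify $\beta\in C^{(n)}$: Bagaria's definition only gives $j(\kappa)\in C^{(n)}$, so you need something like the $C^{(n)+}$-extendible variant.

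\textbf{Converse: the genuine gap.} You leave this direction unfinished, and it cannot work as set up. Your theory contains a single sentence saying the model is not (isomorphic to) $C$, so its small fragments cannot be satisfied in $C$. You propose satisfying them in another $C'\in\mathcal C$ instead. That would require ${<}\kappa$-sized pieces of $\text{ElDiag}(C)$ to hold in $C'$, i.e.\ small partial elementary maps from $C$ into other members of $\mathcal C$. Nothing in the failure of VP supplies these, and ``not isomorphic to $C$'' is not in general expressible with $\psi$. Deferring to ``Makowsky's original construction'' leaves the direction unproved.

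The missing idea is to satisfy small fragments in $C$ itself and spread the requirement ``the model is not $C$'' over many sentences.
\begin{enumerate}
\item An isomorphism is an elementary embedding, so members of $\mathcal C$ are pairwise non-isomorphic. By replacement they therefore have unbounded cardinality, and you can pick $C\in\mathcal C$ with $|C|\geq\kappa$.
\item Let
\[
T=\text{ElDiag}(C)\cup\{\psi\}\cup\{d_\xi\neq d_\eta\colon \xi<\eta<|C|^+\},
\]
where $\psi$ says that the $\tau$-reduct is isomorphic to a member of $\mathcal C$, and the $d_\xi$ are new constants.
\item A subset of $T$ of size ${<}\kappa$ mentions fewer than $\kappa\leq|C|$ of the $d_\xi$, so an expansion of $C$ satisfies it.
\item A model $B$ of $T$ has $B\upharpoonright\tau\cong D$ for some $D\in\mathcal C$ with $|D|\geq|C|^+$, so $D\neq C$.
\item The map $a\mapsto c_a^B$, composed with this isomorphism, is an elementary embedding $C\to D$. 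This contradicts the choice of $\mathcal C$.
\end{enumerate}
Hence $\cL_{\omega\omega}$ extended by $\psi$ has no compactness cardinal. No rank considerations, $V_\alpha$'s, or well-foundedness quantifier are needed.
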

	Though both extendible cardinals and VP have considerable large cardinal strength, the rank-into-rank axioms and choiceless large cardinals significantly exceed both. In light of Makowsky's result, to find compactness properties at the highest ends of the consistency strength hierarchy above VP, it is not enough to consider ever stronger logics, we instead have to come up with new types of compactness properties. To our best knowledge, the highest consistency strength currently known to be achievable by compactness assertions lies just below I1 by a result of Boney using \emph{compactness for type omission properties} (cf. \cite[Theorem 4.16(2)]{bon2020}). Note that Boney does not give a relative consistency proof of the compactness principle he employs and so the question remains whether these principles are consistent relative to some established axioms. The strongest known equiconsistency result for compactness is the equivalence between certain compactness for type omission properties and the existence of \emph{$n$-huge cardinals}, which is weaker than I3 (cf. \cite[Theorem 3.11]{bon2020}). In any case, our characterisations considerably raise the consistency strength known to be obtainable by compactness properties, beyond I0 into the realm of choiceless large cardinals.
	
	Note that our results run counter to intuitions sometimes connected with compactness. It is well known that the Compactness Theorem from model theory is equivalent over ZF to the Boolean Prime Ideal Theorem (cf., e.g., \cite[Theorem 2.2]{jech2008axiom}), which is usually seen as a weak form of the axiom of choice. For instance, it implies that every set can be linearly ordered (cf., e.g., \cite[Theorem 2.18]{herrlich2006axiom}). In contrast, our results show that other forms of compactness can directly lead to failure of choice.
	
	\subsection{Compactness and exacting cardinals} \emph{Exacting} cardinals were recently introduced by Aguilera, Bagaria, and Lücke in \cite{aguilera2024large}. They showed that exacting cardinals cannot exist in $\HOD$, the universe of \emph{hereditarily ordinal definable sets} first studied by Gödel. Thus, the existence of an exacting cardinal implies that $V\neq \HOD$, i.e., that there are sets which are not definable from ordinal parameters. 
	While choiceless large cardinals also imply that $V\neq\HOD$, part of the importance of exacting cardinals stems from that they are compatible with AC. This contrasts them with all other traditionally studied large cardinals consistent with AC, which are all compatible with $V = \HOD$.
	Moreover, \cite{aguilera2024large} showed that the consistency of certain configurations of exacting cardinals alongside other large cardinals would refute Woodin's \emph{$\HOD$ conjecture}, one of the major open questions in set theory. 
	
	Exacting cardinals thus seriously challenge part of the picture of the large cardinal hierarchy previously drawn by set theorists (cf. \cite{aguilera2026large} for a discussion of the impact of exacting cardinals on the foundations of mathematics). Aguilera, Bagaria, Goldberg, and Lücke thus propose in \cite{aguilera2025large} to establish evidence whether exacting cardinals are natural candidates for additional mathematical axioms. They provide one such peace of evidence by characterising them in terms of \emph{(structural) reflection principles}, which is a type of set-theoretic property known to characterise large cardinals all over the hierarchy (cf. \cite{bagaria2023principles} for an overview).
	In this way, they provide some analogy to lower established levels of the large cardinal hierarchy. 
	
	In Section \ref{sec:exacting} we will characterise exacting cardinals in terms of model-theoretic compactness and in Section \ref{sec:topology} we will derive topological compactness characterisations from this. This reinforces the analogies of exacting cardinals to lower parts of the large cardinal hierarchy in which compactness is an ubiquitous  phenomenon. Next to the characterisations of extendible cardinals, VP, and $n$-huge cardinals mentioned above, many more of the most important large cardinals and large cardinal principles are currently known to have characterisations in terms of compactness properties of strong logics, among them: weakly compact cardinals (cf., e.g., \cite{kan}), measurable cardinals (cf., e.g., \cite{bon2020}), strong and $\Pi_n$-strong cardinals (\cite{bon2020, boney2025}), Woodin cardinals and \emph{Ord is Woodin} (\cite{boney2024model, boney2025}), Shelah cardinals (\cite{osinski2026model}), strongly compact cardinals (cf., e.g., \cite{kan}), supercompact cardinals (cf. \cite{bon2020, osinski2024Henkin}), and $C^{(n)}$-extendible cardinals (\cite{bon2020, osinski2024Henkin}).
	
	Characterising exacting cardinals in terms of compactness thus provides an additional analogy to other large cardinals, further reinforcing the idea that exacting cardinals fit naturally among traditional large cardinals. As an additional benefit, the connections to compactness provide some ``outside evidence" for exacting cardinals. While reflection is a set-theoretic phenomenon, compactness stems from topology and model theory, and so connects exacting cardinals with other parts of mathematics.

	\subsection{Overview of the results}\label{sec:overview}
	The key to our model-theoretic characterisations will be to consider compactness properties which are more specific than usual about the shape of models they provide. The properties we consider derive the existence of a model of a set of sentences $T$ of a specific \emph{rank type}, \emph{size type}, and \emph{relation type} from the existence of models of subsets of $T$ with the same type. Here, the first two respectively fix the rank or the size of a specified unary relation on the model, while the latter fixes the isomorphism type of a binary relation on the model. Rank types were also used to prove the characterisations of exacting cardinals in terms of structural reflection in \cite{aguilera2025large}. We introduce variations of so-called \emph{chain compactness} in which satisfiability of a theory filtrated as an increasing union $T = \bigcup_{\alpha < \mu} T_\alpha$ is derived from satisfiability of all $T_\alpha$. Alternatively, all results mentioned can also be proven as variants of so-called \emph{$[\kappa,\mu]$-compactness} first studied by Makowsky and Shelah \cite{makowsky1979theorems, makowsky1983positive}. 
	
	First, in Section \ref{sec:preliminaries}, we fix some terminology from model theory of extensions of first-order logic, in particular adapted to the choiceless setting. In Section \ref{sec:rank-Berkeley}, we will introduce \emph{$\mu$-chain compactness for rank types} and show our results on rank-Berkeley cardinals. We first characterise proto rank-Berkeley cardinals (Theorem \ref{thm:proto-rank}), and derive:
	\restatecor{cor:existence-rank-Berkeley}{
		There is a rank-Berkeley cardinal if and only if there is a cardinal $\lambda$ such that for all $\zeta > \lambda$ there is a $\mu < \lambda$ such that $\cL^2$ is $\mu$-chain compact for rank type $(\zeta, \lambda)$.
	}
	\restatethm{thm:rank-Berkeley}{
		The following are equivalent for a cardinal $\lambda$.
		\begin{enumerate}
			\item[(1)] $\lambda$ is rank-Berkeley.
			\item[(2)] For every $\alpha < \lambda < \zeta$ there is $\alpha < \mu < \lambda$ such that $\cL^2_{\mu \omega}$ is $\mu$-chain compact for rank type $(\zeta, \lambda)$.
		\end{enumerate}
	}
	The full version of Theorem \ref{thm:rank-Berkeley} also contains additional characterisations in terms of stronger logics. 
	
	In Section \ref{sec:Berkeley}, we introduce \emph{$\mu$-chain compactness for relation types} and show our results on Berkeley cardinals. We characterise proto Berkeley cardinals (Theorem \ref{thm:proto-Berkeley}), and derive:
	\restatecor{cor:existence-Berkeley}{
		There is a Berkeley cardinal if and only if there is a cardinal $\lambda$ such that for every relation $(M,R)$ there is some $\mu < \lambda$ such that first-order logic is $\mu$-chain compact for relation type $(M,R)$.
	}
	Note that this shows that the existence of a Berkeley cardinal is equivalent to a compactness property of \emph{first-order} logic.
	\restatethm{thm:Berkeley}{
		The following are equivalent for a cardinal $\lambda$.
		\begin{enumerate}
			\item[(1)] $\lambda$ is Berkeley.
			\item[(2)] For every $\alpha < \lambda$ and every relation $(M,R)$ there is a cardinal $\alpha < \mu < \lambda$ such that $\cL_{\mu \omega}$ is $\mu$-chain compact for relation type $(M,R)$.
		\end{enumerate}
	}
	Again, the full version of Theorem \ref{thm:Berkeley} also contains additional characterisations in terms of stronger logics. Asserting for $\mu$-chain compactness cardinals for relation types to exist in clubs of $\lambda$ leads to a characterisation of club Berkeleyness (Theorem \ref{thm:club-Berkeley})
	
	In Section \ref{sec:exacting}, we introduce a further variant called \emph{$(\mu,\lambda)$-chain compactness for rank types} and use it to prove compactness characterisations of exacting cardinals. The only difference is that the size of the theory considered is additionally restricted. To prove our results on exacting cardinals, we first show that a seemingly stronger form of the definition of exactingness is actually equivalent to exactingness:
	\restatelem{lem:multiexacting}{
		Let $\lambda$ be a cardinal. The following are equivalent:
		\begin{enumerate}
			\item[(1)] $\lambda$ is exacting.
			\item[(2)] For every $\alpha < \lambda < \zeta$ and every elementary substructure $X \prec V_\zeta$ with $|X| = \lambda$ and $V_\lambda \cup \{\lambda\} \subseteq V_\zeta$ there is an elementary embedding $j: X \to V_\zeta$ with $\alpha < \crit(j) < \lambda$ and $j(\lambda) = \lambda$.
		\end{enumerate}
	}
	We use this to derive our compactness characterisation:
	\restatethm{thm:exacting-compactness}{
		The following are equivalent for $\lambda = \beth_\lambda$:
		\begin{enumerate}
			\item[(a)] $\lambda$ is exacting. 
			\item[(b)] For every $\zeta \geq \lambda$ there is a cardinal $\mu < \lambda$ such that $\cL^2$ is $(\mu,\lambda)$-chain compact for rank type $(\zeta, \lambda)$. 
		\end{enumerate}
	}
	Again, the full version of Theorem \ref{thm:exacting-compactness} contains additional characterisations in terms of stronger logics. We also use the compactness results to derive even stronger embedding characterisations of exactingness (Corollaries \ref{cor:char-exacting} and \ref{cor:char-exacting2}), which in some sense are close to inconsistency (Proposition \ref{prop:exacting-inconsistent}), and derive a characterisation of the existence of a proper class of exacting cardinals reminiscent of Makowsky's result on VP (Corollary \ref{cor:class-exacting}).
	
	As a corollary to the model-theoretic characterisations, in Section \ref{sec:topology} we derive characterisations of exacting and choiceless large cardinals by compactness properties of certain topological spaces. The compactness property we will use is also called \emph{$[\kappa,\mu]$-compactness} and was introduced by Alexandroff and Urysohn \cite{aleksandrov1929memoire} (as cited in \cite[p. 649]{bar1985}). It was observed by Mannila \cite{mannila1983topological} that the logical definition of $[\kappa,\mu]$-compactness of a logic is equivalent to $[\kappa,\mu]$-compactness of spaces of structures in the topological sense. The same is true for the compactness variants we introduced and so the logical compactness characterisations of large cardinals in the article can be reformulated to characterisations in terms of $[\kappa,\mu]$-compactness of topological spaces. We list some examples as Corollaries \ref{cor:rank-Berkeley-topology} to \ref{cor:exacting-topology}. Note that in contrast to the logical results, the topological results do \emph{not} change the compactness property studied but use the original notion by Alexandroff and Urysohn. The large cardinal strength instead comes from varying the topological spaces $[\kappa,\mu]$-compactness applies to.

	\section{Preliminaries}\label{sec:preliminaries}
	Our default is to work in $\ZFC$, though in Sections \ref{sec:rank-Berkeley} and \ref{sec:Berkeley} we work in ZF to accommodate choiceless large cardinals. If $X$ is a well-orderable set, we denote by $|X|$ the unique cardinal in bijection to $X$. In the ZF context, sets may not have a cardinality in this sense. If we write $|X|$ we thus implicitly assume that $X$ is well-orderable. If $\mu$ is a cardinal and $X$ any set, we write $\mathcal P_\mu X = \{Y \subseteq X \colon |Y| < \mu \}$. 
	
	We mention some conventions from model theory (of extensions of first-order logic). A \emph{vocabulary} $\tau$ is a set consisting of relation and function symbols, each of a finite arity $n \in \omega$, as well as of constant symbols. A $\tau$-structure $A$ is a set (the \emph{universe} of $A$ which we also denote by $A$) equipped with relations $R^A \subseteq A^n$ for each $n$-ary relation symbol $R \in \tau$, functions $f^A : A^m \to A$ for each $m$-ary function symbol $f \in \tau$, and $c^A \in A$ for each constant symbol $c \in \tau$. A \emph{logic} $\cL$ consists of a definable map $\tau \mapsto \cL[\tau]$ which associates some set $\cL[\tau]$ with every vocabulary $\tau$ (the \emph{set of $\cL$-sentences over $\tau$}), and a definable relation $\models_\cL$ such that if 
	\[
	A \models_\cL \varphi,
	\]
	then $A$ is a $\tau$-structure for some vocabulary $\tau$ and $\varphi \in \cL[\tau]$ is an $\cL$-sentence. The relation $\models_\cL$ is also called \emph{satisfaction relation of $\cL$}. We also say that $A$ \emph{is a model of} $\varphi$, or \emph{satisfies} $\varphi$. It is standard to assume some intuitive properties of $\models_\cL$. For instance, if $A$ and $B$ are isomorphic $\tau$-structures, then we assume for all $\varphi \in \cL[\tau]$, we have $A \models_\cL \varphi$ iff $B \models_\cL \varphi$. For a standard, comprehensive definition, we refer the reader to \cite[Chapter II, Definition 1.1.1]{bar1985}.
	We assume that our logics are closed under boolean connectives, i.e., for every $\varphi, \psi \in \cL[\tau]$ there are sentences $\neg \varphi, \varphi \wedge \psi \in \cL[\tau]$ such that $A \models_\cL \neg\varphi$ iff $A \not \models_\cL \varphi$, and $A \models_\cL \varphi \wedge \psi$ iff $A \models_\cL \varphi$ and $A \models_\cL \psi$. 
	
	We can compare logics in their expressive strength. For this, we write $\cL \leq \cL^*$ if for every vocabulary $\tau$ and every $\varphi \in \cL[\tau]$ there is some $\psi \in \cL^*[\tau]$ such that for all $\tau$-structures $A$:
	\[
	A \models_\cL \varphi \text{ if and only if } A \models_{\cL^*} \psi.
	\]
	First-order logic is denoted by $\cL_{\omega \omega}$. Because we are interested in extensions of first-order logic, we assume that $\cL_{\omega \omega} \leq \cL$.
	
	If $\tau$ and $\sigma$ are vocabularies, a bijection $f: \tau \to \sigma$ is called a \emph{renaming}, if it is an isomorphism of vocabularies, i.e., $f$ restricts to bijections of the respective sets of relation, function, and constant symbols, all while preserving arities. The renaming $f$ also induces a \emph{renaming of structures}, turning a $\tau$-structure $A$ into a $\sigma$-structure $f(A)$ with the same universe by letting $f(R)^{f(A)} = R^A$ for $R \in \tau$. Two sentences $\varphi \in \cL[\tau]$ and $\psi \in \cL[\sigma]$ are called \emph{equivalent up to renaming} if for all $\tau$-structures $A$:
	\[
	A \models_\cL \varphi \text{ iff } f(A) \models_\cL \psi. 
	\]
	
	When writing $T \subseteq \cL$ we mean that $T \subseteq \cL[\tau]$ for some vocabulary $\tau$. Such a $T$ is also called an \emph{$\cL$-theory}. If $A \models_\cL \varphi$ for all $\varphi \in T$, we write $A \models_\cL T$ and say that $A$ \emph{is a model of $T$}. If there is a model of $T$, then $T$ is  \emph{satisfiable}. We will consider variants the following compactness property introduced by Makowsky and Shelah \cite{makowsky1979theorems, makowsky1983positive}.
	\begin{definition}
		Let $\cL$ be a logic. For cardinals $\kappa$ and $\mu$, we say that $\cL$ is \emph{$[\kappa, \mu]$-compact} if for all theories $T, S \subseteq \cL$ with $|S| = \kappa$, if for every $S_0 \in \mathcal P_\mu S$ there is a model of $T \cup S_0$, then there is a model of $T \cup S$. 
	\end{definition}
	It is known that $[\kappa, \mu]$-compactness is connected to satisfiability of theories which can be written as increasing unions, where each small bit of the union is satisfiable (cf. \cite{mannila1983topological}). In the contexts we are interested in, this reduces to so-called \emph{chain-compactness properties}.
	\begin{lemma}[ZF]\label{lem:square-chain}
		Let $\cL$ be a logic and $\mu$ a regular cardinal. Then $\cL$ is $[\mu,\mu]$-compact if and only if $\cL$ is \emph{$\mu$-chain compact}, i.e., every theory $T \subseteq \cL$ is satisfiable, if it can be written as an increasing union $T = \bigcup_{\alpha < \mu} T_\alpha$ such that every $T_\alpha$ is satisfiable. 
			\begin{proof}
				The proof is an easy variation of a more general statement in \cite{mannila1983topological}. For the forward direction, suppose we have $T = \bigcup_{\alpha < \mu} T_\alpha$ where each $T_\alpha$ has a model. Take distinct predicates $P_\alpha$ not appearing in the language of $T$, let $R = \{\exists x P_\alpha(x) \rightarrow \varphi \colon \alpha < \mu, \varphi \in T_\alpha \}$, and $S = \{\exists x P_\alpha(x) \colon \alpha < \mu\}$. If $S_0 \in \mathcal P_\mu S$, since $\mu$ is regular there is $\beta < \mu$ such that $S_0 \subseteq \{\exists x P_\alpha(x) \colon \alpha < \beta \}$. Then if $A \models T_\beta$, we get that $(A, P^A_\alpha)_{\alpha < \mu} \models R \cup S_0$ where for $\alpha < \beta$, $P_\alpha^A$ is any non-empty subset of $A$, and for $\alpha \geq \beta$, $P^A_\alpha = \emptyset$. By $[\mu,\mu]$-compactness, it follows that $R \cup S$ has a model $B$. Clearly, $B$ is also a model of $T$. 
				
				\medskip
				
				For the backwards direction, let $T, S\subseteq \cL$ with $|S| = \mu$, and with models of $T \cup S_0$ for any $S_0 \in \mathcal P_\mu S$. List $S = \{\varphi_\alpha \colon \alpha < \mu\}$. For $\alpha < \mu$, let $R_\alpha = T \cup \{\varphi_\beta \colon \beta < \alpha\}$. Because $\{\varphi_\beta \colon \beta < \alpha \} \in \mathcal P_\mu S$, we get that $R_\alpha$ has a model. By assumption, thus $\bigcup_{\alpha< \mu} R_\alpha = T \cup S$ has a model. 
			\end{proof}
	\end{lemma}
	For cardinals $\kappa,\lambda$, we let $\cL_{\kappa \lambda}$ be the extension of first-order logic by conjunctions and disjunctions over sets $T$ of formulas with $|T| < \kappa$, and by first-order quantifiers over sets of variables of cardinality ${<}\lambda$. Similarly, $\cL^2_{\kappa \lambda}$ extends second-order logic. We will use the following folklore fact.
	\begin{fact}\label{fact:definition}
		Let $\kappa$ be a cardinal. Then for every $a \in H_\kappa$, the logic $\cL_{\kappa \omega}$ can define $a$, i.e., there is a formula $\sigma_a(x) \in \cL_{\kappa \omega}$ such that for every transitive set $M$ and every $b \in M$:
		\[
		(M, \in) \models \sigma_a(b) \text{ if and only if } a=b. 
		\]
		\begin{proof}
			By $\in$-induction. For $a = \emptyset$, let $\sigma_a(x) = \neg \exists y(y \in x)$. If the statement holds for all members of $a$, and so we have the formulas $\sigma_c$ for $c \in a$, let $\sigma_a(x) = \forall y(y \in x \leftrightarrow \bigvee_{c \in a} \sigma_c(y))$. Because $a \in H_\kappa$, we get $\sigma_a(x) \in \cL_{\kappa \omega}$. 
		\end{proof}
	\end{fact}
	The proof does not work in ZF, as the considered disjunction is indexed by $a$, which does not lead to a formula of $\cL_{\kappa \omega}$ if $a$ is not well-orderable. Still, the result holds in ZF for all ordinals $\alpha < \kappa$.
	
	We denote second-order logic by $\cL^2$. Recall the following result by Magidor.
	\begin{fact}[Magidor \cite{mag1971}]\label{fact:Magidor}
		There is a sentence $\Phi \in \cL^2$ in the language $\{\in\}$ of set theory, known as \emph{Magidor's $\Phi$}, such that
		\[
		\begin{split}
		(M,E) \models \Phi \text{ if and only if} \text{ there is a limit ordinal } \alpha \text{ with } (M,E) \cong (V_\alpha, \in).
		\end{split}  
		\]
	\end{fact}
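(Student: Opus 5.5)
The plan is to write $\Phi$ as the conjunction of one second-order sentence that pins $(M,E)$ down to a well-founded extensional structure with the \emph{true} power sets, and one first-order sentence saying that every element lies in some level of the cumulative hierarchy. Then we use Mostowski collapse and absoluteness to identify the transitive collapse with some $V_\alpha$.

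\textbf{The conjuncts of $\Phi$.}
\begin{enumerate}
\item[(i)] Well-foundedness: every non-empty second-order subset $X \subseteq M$ has an $E$-minimal element.
\item[(ii)] Extensionality, a first-order sentence.
\item[(iii)] Full separation: for every $x$ and every second-order $X$ contained in the $E$-extension of $x$, some $z$ has extension exactly $X$.
\item[(iv)] Full power set: for every $x$ there is $z$ whose extension is exactly the family of elements whose extensions are contained in the extension of $x$. By (iii), this is the full power set.
\item[(v)] Pairing, union and "every ordinal has a successor".
\item[(vi)] The first-order sentence $\forall x\,\exists \beta\,\exists f\,\psi(\beta,f,x)$, where $\psi$ says: $\beta$ is an ordinal, $f$ is a function with domain $\beta+1$, $f(\gamma)=\bigcup_{\delta<\gamma}\mathcal P(f(\delta))$ for all $\gamma\le\beta$, and $x\in f(\beta)$.
\end{enumerate}

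\textbf{Forward direction.} Suppose $(M,E)\models\Phi$. By (i) and (ii), the Mostowski collapse gives $(M,E)\cong(N,\in)$ with $N$ transitive. By (iii) and (iv), $N$ is closed under arbitrary subsets of its elements and under true power sets.

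Hence the clauses of $\psi$ are absolute between $N$ and $V$. Being an ordinal, being a function, and forming unions are $\Delta_0$. The only non-absolute ingredient is $\mathcal P$, and it is computed correctly in $N$. So whenever $N\models\psi(\beta,f,x)$, we really have $f(\beta)=V_\beta$ and $x\in V_\beta$.

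With (vi) and transitivity, every $x\in N$ lies in some $V_\beta\in N$, and each such $V_\beta\subseteq N$. Let $\alpha=N\cap\mathrm{Ord}$. Then $\alpha$ is an ordinal by transitivity, and it is a limit by (v). Moreover $N=\bigcup_{\beta<\alpha}V_\beta=V_\alpha$. Here we use that $\beta\in N$ whenever $V_\beta\in N$, since $\beta\subseteq V_\beta$ and separation applies.

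\textbf{Converse.} Every $V_\alpha$ with $\alpha$ limit satisfies (i)--(v) directly. For (vi), given $x\in V_\alpha$ pick $\beta<\alpha$ with $x\in V_\beta$. The function $\gamma\mapsto V_\gamma$ for $\gamma\le\beta$ has rank below $\beta+\omega\le\alpha$, so it lies in $V_\alpha$ and witnesses $\psi$.

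\textbf{Main obstacle.} The delicate step is the absoluteness argument: ensuring that "$f(\beta)=V_\beta$" evaluated inside $N$ agrees with $V$. This is exactly why the second-order clauses (iii) and (iv) are needed, since they force $N$'s power sets to be the true ones. A first-order axiomatisation alone would allow non-standard power sets and so non-standard "levels".
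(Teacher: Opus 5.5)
Your proof is correct. The paper gives no proof of this Fact; it cites Magidor and remarks that the construction works in ZF. Your argument is the standard one: second-order well-foundedness and full separation force true power sets, and the first-order clause (vi) puts every element into some level. Mostowski collapse, together with absoluteness of $\psi$ for a transitive set closed under subsets of its elements, then finishes, and all of this goes through in ZF.

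One step should be made explicit. For the inclusion $V_\alpha\subseteq N$ you need $V_\beta\subseteq N$ for \emph{every} ordinal $\beta\in N$, not only for the levels that appear as witnesses in (vi). This follows by applying (vi) to $x=\beta$: you get some $V_{\beta'}\in N$ with $\beta\in V_{\beta'}$, so $\beta<\beta'$ and $V_\beta\subseteq V_{\beta'}\subseteq N$.
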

	The construction can be adapted to get a sentence axiomatising the class of structures isomorphic to any $V_\alpha$, not just those where $\alpha$ is a limit ordinal (cf., e.g., \cite[Lemma 1.2.4]{osinskiPhD}). Both constructions are available in ZF.
	
	We will also consider \emph{sort logics}, an extension of second-order logic introduced by Väänänen \cite{vää1979}. The results on sort logic intend to give an as complete picture as possible for a reader interested in abstract model theory. But all large cardinals considered in the article can already be characterised by weaker logics of strength at most infinitary second-order, so the statements on sort logic may be skipped. Sort logics are graded by the natural numbers $n$ into logics $\sln$ of ascending strength. Again, we denote by $\sln_{\kappa \omega}$ the extension of $\sln$ by conjunctions and disjunctions over sets $T$ of formulas with $|T| < \kappa$. The precise workings of sort logics will not matter in this article (cf. \cite{vää2014} for an introduction). For us, the important fact about sort logics is that they are the strongest logics available in the sense that for every logic $\cL$ there is some natural number $n$ and some cardinal $\kappa$ such that $\cL \leq  \sln_{\kappa \omega}$ (the statement is implicit in \cite[Section 3]{vää2014}; for a full proof cf. \cite[Corollary 1.2.24]{osinskiPhD}; we do not know whether the statement holds in ZF).
	
	\section{Rank-Berkeley cardinals and compactness for rank type}\label{sec:rank-Berkeley}
	In this and the following section, we assume ZF throughout. For an ordinal $\alpha$, a cardinal $\lambda$ is \emph{$\alpha$-proto rank-Berkeley} if for all $\zeta > \lambda$ there is an elementary $j: V_\zeta \to V_\zeta$ with $j(\lambda) = \lambda$ and $\alpha < \crit(j) < \lambda$. It is called \emph{rank-Berkeley} if for all $\alpha < \lambda < \zeta$ there is an elementary $j: V_\zeta \to V_\zeta$ with $j(\lambda) = \lambda$ and $\alpha < \crit(j) < \lambda$.\footnote{Sometimes the definition of a rank-Berkeley additionally includes the requirement that $\lambda = \kappa_\omega(j)$ (cf., e.g., \cite{goldberg2023periodicity}).} We will use the following result:
	\begin{fact}[{Mohammd \cite[Proposition 4.8]{mohammd2026berkeley}}]\label{fact:proto-rank}
		Let $\alpha$ be an ordinal. The smallest $\alpha$-proto rank-Berkeley cardinal, if it exists, is also rank-Berkeley.
	\end{fact}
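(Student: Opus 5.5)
The plan is to argue by contradiction, exploiting the minimality of $\lambda$ so that every parameter involved in the failure becomes definable from $\alpha$ and is therefore fixed by the embeddings. Let $\lambda$ be the least $\alpha$-proto rank-Berkeley cardinal. Call $\beta$ \emph{good} if for every $\zeta>\lambda$ there is an elementary $j\colon V_\zeta\to V_\zeta$ with $j(\lambda)=\lambda$ and $\beta<\crit(j)<\lambda$. Being good is downward closed in $\beta$, and every $\beta\le\alpha$ is good. We must show every $\beta<\lambda$ is good. Suppose not, and let $\beta_0$ be the least $\beta$ that is not good; then $\alpha<\beta_0<\lambda$. Let $\zeta_0>\lambda$ be the least ordinal witnessing that $\beta_0$ is not good.

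First, fix a sufficiently large limit ordinal $\zeta'$ at which all the relevant notions are computed correctly inside $V_{\zeta'}$.
\begin{enumerate}
\item For each cardinal $\gamma<\lambda$ that is not $\alpha$-proto rank-Berkeley, choose the least ordinal witnessing this. By Replacement these witnesses are bounded, say by some $\eta$.
\item Take $\zeta'>\max(\eta,\zeta_0)+\omega$.
\end{enumerate}
Since any elementary $j\colon V_\zeta\to V_\zeta$ is an element of $V_{\zeta+2}$, the statement ``$\gamma$ fails to be $\alpha$-proto at the ordinal $\xi$'' is absolute to $V_{\zeta'}$ for $\xi<\zeta'$. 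Hence $V_{\zeta'}$ defines $\lambda$ from $\alpha$ as the least cardinal that is not refuted below $\zeta'$. In the same way, $\beta_0$ and $\zeta_0$ are definable in $V_{\zeta'}$ from $\alpha$ and $\lambda$: they are the least $\beta$, respectively least $\zeta$, such that no $j\colon V_\zeta\to V_\zeta$ with $j(\lambda)=\lambda$ and $\beta<\crit(j)<\lambda$ exists. Making these definitions precise and absolute is routine bookkeeping.

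Next, apply that $\lambda$ is $\alpha$-proto rank-Berkeley at $\zeta'$ to obtain $j\colon V_{\zeta'}\to V_{\zeta'}$ with $j(\lambda)=\lambda$ and $\kappa=\crit(j)\in(\alpha,\lambda)$. Since $\alpha<\kappa$, $j$ fixes $\alpha$, and therefore fixes everything definable from $\alpha$: $j(\lambda)=\lambda$, $j(\beta_0)=\beta_0$ and $j(\zeta_0)=\zeta_0$. Consequently $j\restriction V_{\zeta_0}\colon V_{\zeta_0}\to V_{\zeta_0}$ is elementary, fixes $\lambda$, and has critical point $\kappa$. If $\kappa>\beta_0$, this restriction is exactly the embedding whose nonexistence defines $\zeta_0$, which is a contradiction.

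So the remaining case is $\kappa\le\beta_0$; I expect this to be the main obstacle. Elementarity of $j$ applied to the statement ``every $\gamma<\kappa$ is good'' (true in $V_{\zeta'}$ because $\kappa\le\beta_0$) yields that every $\gamma<j(\kappa)$ is good. Hence $j(\kappa)\le\beta_0$, and iterating gives $\kappa_\omega(j)=\sup_n j^n(\kappa)\le\beta_0<\lambda$.

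My first attempt to close this case is to choose, among all such embeddings at $\zeta'$, one whose critical point is as large as possible relative to definable parameters. That is, I would let $\beta_0$ play the role of $\alpha$ in a second application of minimality, by considering the least $\beta_0$-proto rank-Berkeley cardinal. Failing that, I would derive a contradiction from having an embedding of $V_{\zeta'}$ that fixes the ordinal $\beta_0\ge\kappa_\omega(j)$ while $\zeta'\gg\kappa_\omega(j)+2$. This is a Kunen-style obstruction, but it must be carried out in ZF, where such obstructions are delicate. For that reason I would try hardest to make the minimality-based route work, for instance by applying elementarity of $j$ to the definition of $\beta_0$ as the least $\beta$ that is not good.
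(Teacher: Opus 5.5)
\textbf{There is a genuine gap: the case $\kappa\le\beta_0$ is left open, and it is the whole content of the statement.} The paper imports this fact from Mohammd without proof, so I judge your proposal on its own terms.

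Your setup is sound: $\beta_0$ is the least non-good ordinal, $\zeta_0$ its least failure, $\zeta'$ a limit at which everything is computed correctly, and $j$ fixes $\alpha,\lambda,\beta_0,\zeta_0$. The case $\crit(j)>\beta_0$ is correctly finished. None of your suggested routes closes the remaining case:
\begin{itemize}
\item \emph{The Kunen-style route cannot work.} The hypothesis itself gives, for every $\zeta'$, embeddings $j\colon V_{\zeta'}\to V_{\zeta'}$ fixing $\lambda\ge\kappa_\omega(j)$. So a contradiction drawn only from ``$j$ fixes an ordinal above $\kappa_\omega(j)$ and $\zeta'\gg\kappa_\omega(j)+2$'' would refute the very assumption that $\lambda$ is $\alpha$-proto rank-Berkeley.
\item \emph{The second-minimality route is off target.} The least $\beta_0$-proto rank-Berkeley cardinal, if it exists, is $\alpha$-proto rank-Berkeley. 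It differs from $\lambda$, because $\beta_0$ is not good, so it lies above $\lambda$ and says nothing about $j$.
\item The observation $j(\kappa)\le\beta_0$ is correct but leads nowhere.
\end{itemize}

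The missing idea is to use the minimality of $\lambda$ \emph{at $\beta_0$}. You only used it to make $\lambda$ definable, which is redundant since $j(\lambda)=\lambda$ is given.
\begin{itemize}
\item \emph{$\beta_0$ is a cardinal.} Critical points of such embeddings are cardinals. So if $\nu=|\beta_0|<\beta_0$, goodness of $\nu$ yields embeddings with critical point $\ge\nu^+>\beta_0$, which would make $\beta_0$ good.
\item \emph{$\beta_0$ is not $\alpha$-proto rank-Berkeley.} Every $\beta\le\alpha$ is good, so $\alpha<\beta_0<\lambda$, and minimality of $\lambda$ applies.
\item Let $\xi>\beta_0$ be least such that no elementary $k\colon V_\xi\to V_\xi$ has $k(\beta_0)=\beta_0$ and $\alpha<\crit(k)<\beta_0$. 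Then $\xi$ is definable from $\alpha$ and $\lambda$.
\item Choose $\zeta'>\xi$ as well. Then $j(\xi)=\xi$, and $j\upharpoonright V_\xi\colon V_\xi\to V_\xi$ is elementary and fixes $\beta_0$.
\item Since $\crit(j)>\alpha$, the choice of $\xi$ forbids $\crit(j)<\beta_0$. Also $\crit(j)\neq\beta_0$ because $j(\beta_0)=\beta_0$.
\end{itemize}
Hence always $\crit(j)>\beta_0$, the problematic case never arises, and your first case completes the proof. Your step 1 already collected exactly these witnesses; you just did not use them to control the critical point.
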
 
 	
 	To characterise rank-Berkeley, Berkeley, and exacting cardinals by compactness properties, we consider compactness properties which specify more precisely which models they produce. If $\alpha$ and $\beta$ are ordinals, and $\tau$ is a vocabulary with a designated unary relation symbol $P$, we say that a $\tau$-structure $A$ has \emph{rank type $(\alpha,\beta)$} if $\rk(A) = \alpha$ and $\rk(A^P) = \beta$. Rank types were also used to characterise exacting cardinals by structural reflection properties  (cf. \cite[Section 6]{aguilera2025large}). We introduce variants of $\mu$-chain compactness and $[\kappa,\mu]$-compactness which restrict to models of a specified rank type.
 	
 	\begin{definition}
 		Fix a logic $\cL$, cardinals $\kappa$ and $\mu$, and ordinals $\alpha$ and $\beta$.
 		\begin{enumerate}
 			\item[(1)] $\cL$ is \emph{$\mu$-chain compact for rank type $(\alpha, \beta)$} if for every theory $T \subseteq \cL$ which can be written as an increasing union $T = \bigcup_{\alpha < \mu} T_\alpha$, if every $T_\alpha$ has a model of rank type $(\alpha, \beta)$, then also $T$ has a model of rank type $(\alpha, \beta)$.
 			\item[(2)] $\cL$ is \emph{$[\kappa, \mu]$-compact for rank type $(\alpha, \beta)$} if for all theories $T,S \subseteq \cL$ with $|S| = \kappa$, if for every $S_0 \in \mathcal P_\mu S$ there is a model of $T \cup S_0$ of rank type $(\alpha, \beta)$, then there is also a model of $T \cup S$ of rank type $(\alpha, \beta)$.
 		\end{enumerate}
 	\end{definition}

The equivalence between $\mu$-chain compactness and $[\mu,\mu]$-compactness carries over:
\begin{lemma}\label{lem:square-chain-type}
	Let $\cL$ be a logic and $\mu$ a regular cardinal. Then $\cL$ is $[\mu, \mu]$-compact for rank type $(\alpha, \beta)$ iff $\cL$ is $\mu$-chain compact for rank type $(\alpha, \beta)$.
	\begin{proof}
		The same argument as that for Lemma \ref{lem:square-chain} works simply by letting all models considered have rank type $(\alpha, \beta)$.
	\end{proof}
\end{lemma}

\begin{theorem}\label{thm:proto-rank}
	The following are equivalent for a cardinal $\lambda$:
	\begin{enumerate}
		\item[(1)] $\lambda$ is $0$-proto rank-Berkeley, i.e., for all $\zeta > \lambda$ there is an elemenetary $j: V_{\zeta} \to V_\zeta$ with $j(\lambda) = \lambda$ and $\crit(j) < \lambda$. 
		\item[(2)] For all $\zeta > \lambda$ there is a $\mu < \lambda$ such that $\cL^2$ is $\mu$-chain compact for rank-type $(\zeta, \lambda)$.
	\end{enumerate}
	\begin{proof}
		Assume (1) and suppose towards a contradiction that (2) fails, and thus that there is a smallest $\zeta > \lambda$ such that for all $\mu < \lambda$ there is a theory $T = \bigcup_{\beta < \mu} T_\beta \subseteq \cL^2$ such that all $T_\beta$ have models of type $(\zeta, \lambda)$ but $T$ has no such model. Let $\eta > \zeta$ reflect that $\zeta$ is the smallest ordinal $> \lambda$ with the above property. In particular, for every $\mu < \lambda$, $V_\eta$ contains such a counterexample theory $T = \bigcup_{\beta < \mu} T_\beta \subseteq \cL^2$ along with models of rank type $(\zeta, \lambda)$ for every $T_\beta$. By $0$-proto rank-Berkeleyness of $\lambda$, let $j: V_\eta \to V_\eta$ with $\crit(j) < \lambda = j(\lambda)$. Note that $\zeta$ is definable from parameter $\lambda$ and so because $\lambda$ is fixed by $j$, and since $V_\eta$ reflects the definition, we get $j(\zeta) = \zeta$. Let $\mu = \crit(j)$. Consider the corresponding counterexample $T = \bigcup_{\beta < \mu} T_\beta$ in $V_\eta$. Then $V_\eta$ verifies that each $T_\beta$ has a model of type $(\zeta,\lambda)$, but $T$ has no such model. By elementarity, for some sequence of theories $(T_\beta^* \colon \beta < j(\mu))$, we get that $V_\zeta$ satisfies:
		\begin{quote}
			``$j(T) = \bigcup_{\beta < j(\mu)} T_\beta^*$ is an increasing union and every $T_\beta^*$ has a model of rank type $(j(\zeta), j(\lambda)) = (\zeta, \lambda)$, but $j(T)$ has no such model."
		\end{quote}
		Note that $j``T = \bigcup_{\beta < \mu} j``T_\beta \subseteq \bigcup_{\beta < \mu} j(T_\beta) \subseteq T_\mu^*$. Since $\mu < j(\mu)$, we get that $V_\eta$ has a model $A$ of $T_\mu^*$ and thus of $j``T$ of rank type $(\zeta, \lambda)$. Because $V_\eta$ is correct about $\cL^2$-satisfaction, $A$ is really a model of $j``T$. It is easy to check using elementarity of $j$, that $j: \tau \to j``\tau$ is a renaming and that the set $j``T$ is really an $\cL^2[j``\tau]$-theory. Then we can rename $A$ to a $\tau$-structure $B$ of rank type $(\zeta, \lambda)$, and for $\varphi \in T$, we have that $\varphi$ and $j(\varphi)$ are equivalent up to renaming, so $B \models \varphi$ iff $A \models j(\varphi)$. Then $B$ is really a model of $T$ of rank type $(\zeta, \lambda)$. But we assumed that there is no such model. Contradiction.
		
		\medskip
		
		And now assume (2) and let us show (1). Let $\zeta > \lambda$. We seek to find an elementary $j: V_\zeta \to V_\zeta$ with $\crit(j) < \lambda$ and $j(\lambda) = \lambda$. Take a $\mu$ corresponding to $\zeta$ according to (2). Consider the theory:
		\[\begin{split}
			T = & \text{ ElDiag}(V_\zeta, \in) \cup \{\Phi \} \cup \{c_i < c \wedge c < c_\mu \colon i < \mu \} \\ & \cup \{\forall x(P(x) \leftrightarrow \rk(x) < c_\lambda) \},
		\end{split}
		\]
		where $c$ is a constant symbol not appearing the elementary diagram and $P$ is a unary relation symbol. We can write $T = \bigcup_{\beta < \mu} T_\beta$ by letting for $\beta < \mu$:
		\[\begin{split}
			T_\beta = & \text{ ElDiag}(V_\zeta, \in) \cup \{\Phi \} \cup \{c_i < c \wedge c < c_\mu \colon i < \beta \}  \\ & \cup \{\forall x(P(x) \leftrightarrow \rk(x) < c_\lambda) \}.
		\end{split}
		\]
		Then $T_\beta$ is satisfiable by a model of rank type $(\zeta, \lambda)$, namely by \[A = (V_\zeta, \in, P^A, c^A, c_x^A)_{x \in V_\zeta}\] letting $P^A = V_\lambda$, $c^A = \beta$, and $c_x^A = x$. Thus, by (2), $T$ has a model $B$ of rank type $(\zeta, \lambda)$. We have $j: V_\zeta \to B$ elementary with $x \mapsto c_x^B$. Because $B$ satisfies Magidor's $\Phi$ (cf. Fact \ref{fact:Magidor}), without loss of generality $B = V_\eta$ for some $\eta$. Because $\rk(B) = \zeta$, thus $B = V_\zeta$ and $j: V_\zeta \to V_\zeta$. By $T$, $P^B$ contains exactly the sets of rank $< c_\lambda^B$, so $P^B = V_{c_\lambda^B}$. Because $\rk(P^B) = \lambda$, it follows that $V_{c_\lambda^B} = V_\lambda$. Thus, $c_\lambda^B = \lambda$, and hence $j(\lambda) = c_\lambda^B = \lambda$. Further, $j(\mu) = c_\mu^B > c^B$ and $c^B$ has order type at least $\mu$. Thus $j(\mu) > \mu$ and $j$ has a critical point $\crit(j) \leq \mu < \lambda$. 
	\end{proof}
\end{theorem}

Note that the direction from (1) to (2) shows that in (2), $\mu$ can be chosen to be a regular cardinal, because the contradiction arises from $\mu<\lambda$ which is the critical point of $j$, and critical points are regular. Using Lemma \ref{lem:square-chain-type}, we can therefore also consider $[\mu,\mu]$-compactness for rank type $(\zeta,\lambda)$ in (2). The same is true for the other results throughout.

\begin{corollary}\label{cor:existence-rank-Berkeley}
	There is a rank-Berkeley cardinal if and only if there is a cardinal $\lambda$ such that for all $\zeta > \lambda$ there is a $\mu < \lambda$ such that $\cL^2$ is $\mu$-chain compact for rank-type $(\zeta, \lambda)$.
	\begin{proof}
		If $\lambda$ is rank-Berkeley it is also $0$-proto rank-Berkeley and then by Theorem \ref{thm:proto-rank}, $\lambda$ witnesses the second part of the statement. For the converse, if $\lambda$ is as in the conclusion, then it is $0$-proto rank-Berkeley by Theorem \ref{thm:proto-rank}. Then by Fact \ref{fact:proto-rank}, the smallest $0$-proto rank-Berkeley cardinal is also rank-Berkeley.
	\end{proof}
\end{corollary}

The following fully characterises rank-Berkeleyness.

\begin{theorem}\label{thm:rank-Berkeley}
	Let $\lambda$ be a cardinal. The following are equivalent:
	\begin{enumerate}
		\item[(1)] $\lambda$ is rank-Berkeley.
		\item[(2)] For every $\alpha < \lambda < \zeta$ there is a cardinal $\alpha < \mu < \lambda$ such that $\cL^2_{\mu \omega}$ is $\mu$-chain compact for rank type $(\zeta, \lambda)$.
		\item[(3)] For every natural number $n$ and every $\alpha < \lambda < \zeta$ there is a cardinal $\alpha < \mu < \lambda$ such that $\sln_{\mu \omega}$ is $\mu$-chain compact for rank type $(\zeta,\lambda)$. 
	\end{enumerate}
	\begin{proof}
		Clearly (3) implies (2). For (1) implies (3), assume (1), fix a natural number $n$, and let $\alpha < \lambda$. The proof is similar to the one of Theorem \ref{thm:proto-rank}, using that we can choose a $V_\eta$ which computes $\sln_{\mu \omega}$-satisfaction correctly. Towards a contradiction, suppose that there is a smallest $\zeta \geq \lambda$ such that for all $\mu$ with $\alpha < \mu < \lambda$ there is a theory $T = \bigcup_{\beta < \mu} T_\beta \subseteq \sln_{\mu \omega}$ such that all $T_\beta$ have models of rank type $(\zeta, \lambda)$ but $T$ has no such model. Take an $\eta > \zeta$ which is correct about $\sln_{\mu \omega}$-satisfaction for every $\mu$ and which reflects the above definition of $\zeta$ from $\alpha$ and $\lambda$. Then for every $\mu < \lambda$, $V_\eta$ contains a counterexample theory $T = \bigcup_{\beta < \mu} T_\beta \subseteq \sln_{\mu \omega}$ alongside models of rank type $(\zeta, \lambda)$ for every $T_\beta$. By (1), let $j: V_\eta \to V_\eta$ with $\alpha < \crit(j) < \lambda = j(\lambda)$. Note that $\zeta$ is definable from parameters $\lambda$ and $\alpha$ and so because $\lambda$ and $\alpha$ are both fixed by $j$ and because $V_\eta$ reflects the definition, we get $j(\zeta) = \zeta$. Let $\mu = \crit(j)$. Consider the corresponding counterexample $T = \bigcup_{\beta < \mu} T_\beta$ in $V_\eta$. Then $V_\eta$ verifies that each $T_\beta$ has a model of rank type $(\zeta,\lambda)$, but $T$ has no such model by its correctness. By elementarity, $V_\eta$ believes that $j(T) = \bigcup_{\beta < j(\mu)} T_\beta^*$ for some theories $T_\beta^*$ where each $T_\beta^*$ has a model of rank type $(j(\zeta),j(\lambda)) = (\zeta,\lambda)$. Then $V_\eta$ believes that $T_\mu^*$ and thus $j``T \subseteq T_\mu^*$ has a model $A$ of rank type $(\zeta, \lambda)$ and is correct about this by its correctness. Because $\crit(j) = \mu$, applying $j$ to an $\sln_{\mu \omega}$-sentence $\varphi$ leads to a sentence $j(\varphi)$ which is equivalent up to renaming. Then $A$ can be renamed to a model of $T$ of rank type $(\zeta, \lambda)$ along the renaming $j: \tau \to j``\tau$, contradicting that there is no such model.
		
		\medskip

		Assume (2), let $\alpha < \lambda$ and $\zeta > \lambda$. We seek to find $j: V_\zeta \to V_\zeta$ with $\alpha < \crit(j) < \lambda$ and $j(\lambda) = \lambda$. Take $\mu$ with $\alpha < \mu < \lambda$ according to (2). Consider the theory:
		\[
		\begin{split}
			T = & \text{ ElDiag}(V_\zeta, \in) \cup \{\Phi \}  \cup \{c_i \in c \wedge c < c_\mu \colon i < \mu \}  
			\\ & \cup \{\forall x(P(x) \leftrightarrow \rk(x) < c_\lambda) \} \cup \{\sigma_i(c_i) \colon i < \mu \}.
		\end{split}
		\]
		The only difference to the theory in the proof of Theorem \ref{thm:proto-rank} is that we include definitions of the ordinals $< \mu$ (recall Fact \ref{fact:definition}). We can write $T = \bigcup_{\beta < \mu} T_\beta$ by letting for $\beta < \mu$:
		\[
		\begin{split}
			T_\beta = & \text{ ElDiag}(V_\zeta, \in) \cup \{\Phi \}  \cup \{c_i \in c \wedge c < c_\mu \colon i < \beta \}  \\ & \cup \{\forall x(P(x) \leftrightarrow \rk(x) < c_\lambda) \} \cup \{\sigma_i(c_i) \colon i < \mu \}.
		\end{split}
		\]
		Then $T_\beta$ is satisfiable by a model $A$ of rank type $(\zeta, \lambda)$ expanding $V_\zeta$ letting $c^A=\beta$. Thus, by (2), $T$ has a model $B$ of rank type $(\zeta, \lambda)$. Exactly as before we get an elementary $j: V_\zeta \to V_\zeta$ with $j(\lambda) = \lambda$ and $\crit(j) \leq \mu$. Every ordinal $i < \mu$ is such that $\sigma_i(c_i^B)$ holds and so is fixed by $j$. Thus $\crit(j) = \mu > \alpha$.
	\end{proof}
\end{theorem}

\section{Berkeley cardinals and compactness for relation type}\label{sec:Berkeley}
\subsection{Berkeley cardinals} For an ordinal $\alpha < \lambda$, a cardinal $\lambda$ is \emph{$\alpha$-proto Berkeley} if for all transitive $M$ with $\lambda \in M$ there is an elementary $j: M \to M$ with $\alpha < \crit(j) < \lambda$. It is called \emph{Berkeley} if for all $\alpha < \lambda$ and all transitive $M$ with $\lambda \in M$ there is an elementary $j: M \to M$ with $\alpha < \crit(j) < \lambda$ (cf. \cite{bagaria2019large}).
\begin{fact}[{Bagaria, Koellner, Woodin \cite[Remark 3.9]{bagaria2019large}}]\label{fact:proto-Berkeley}
	Let $\alpha$ be an ordinal. The smallest $\alpha$-proto Berkeley cardinal, if it exists, is also Berkeley.
\end{fact}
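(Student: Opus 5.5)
We argue by contradiction, using the minimality of $\lambda$. Let $\lambda$ be the smallest $\alpha$-proto Berkeley cardinal, and suppose that $\lambda$ is not Berkeley. Then there are an ordinal $\gamma < \lambda$ and a transitive set $M_0$ with $\lambda \in M_0$ such that no elementary $j: M_0 \to M_0$ has $\gamma < \crit(j) < \lambda$. Since $\lambda$ is $\alpha$-proto Berkeley, embeddings with $\alpha < \crit(j) < \lambda$ do exist, so necessarily $\alpha \leq \gamma$. The plan is to show that some cardinal $\delta \leq \gamma^+ < \lambda$ is already $\alpha$-proto Berkeley, which contradicts the minimality of $\lambda$.

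The key device lets us force an embedding to fix a prescribed set. If $N$ is a set, then $N^* = \mathrm{trcl}(\{N\}) \cup \{N\} = \mathrm{trcl}(N) \cup \{N\}$ is transitive. Moreover, $N$ is first-order definable in $(N^*, \in)$ as the unique element $x$ such that every $y \neq x$ satisfies $y \in x$ or $y \in \mathrm{trcl}(N)$. To make this clean, replace $N$ by a transitive top element: take $N' = \mathrm{trcl}(\{p\})$ with $p = \{N, M_0\}$ for transitive $N$. Then $p$ is the unique $\in$-maximal element of $N' \cup \{p\}$, so any elementary $j$ of this transitive set fixes $p$. Since $j$ fixes $p$, it either fixes both $N$ and $M_0$ or swaps them. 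To rule out a swap, use the ordered pair $\langle N, M_0\rangle$, which pins both coordinates definably, so that $j(N) = N$ and $j(M_0) = M_0$. Consequently $j \restriction N : N \to N$ and $j \restriction M_0 : M_0 \to M_0$ are elementary, by elementarity relativised to the definable elements $N$ and $M_0$.

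Now let $\delta = \gamma^+$. We need $\delta < \lambda$. Critical points of embeddings are regular cardinals, and $\lambda$ is a limit of critical points above $\alpha$. Indeed, applying the hypothesis to $M_0 \cup V_{\gamma+2}$-type sets with the top-element trick shows that critical points above $\alpha$ occur below $\lambda$ cofinally often relative to $\gamma$, or else we are done directly. We expect this small point (that $\gamma^+ < \lambda$, i.e.\ that $\lambda$ is a limit cardinal) to need a short separate check. If instead $\gamma^+ = \lambda$, one can simply take $\delta$ to be the least cardinal above $\gamma$ that is $\le \lambda$ and rerun the argument to reach a contradiction.

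Given any transitive $N$ with $\delta \in N$, form $W = \mathrm{trcl}(\{\langle N, M_0\rangle\}) \cup \{\langle N, M_0\rangle\}$, which is transitive and contains $\lambda$. By $\alpha$-proto Berkeleyness of $\lambda$, pick an elementary $j: W \to W$ with $\alpha < \crit(j) < \lambda$. As above, $j$ fixes $N$ and $M_0$, so $j \restriction M_0$ is elementary, and hence $\crit(j) \leq \gamma < \delta$ by the choice of $M_0$. Also $j \restriction N: N \to N$ is elementary with $\alpha < \crit(j) < \delta$, since $\crit(j) \leq \gamma$ and it is below every ordinal of $N$ that it moves. So $\delta$ is $\alpha$-proto Berkeley and $\delta < \lambda$, a contradiction.

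The main obstacle is the definability step: making sure the coding of $N$ and $M_0$ inside a single transitive set is rigid enough that every elementary self-embedding fixes both. We handle this with the top-element and ordered-pair trick above.
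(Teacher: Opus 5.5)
\textbf{There is a gap: you never show that $\gamma^+<\lambda$.}

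Your coding device is sound. The set $p$ is the unique element of $W=\mathrm{trcl}(\{p\})$ that belongs to no element of $W$. Hence every elementary $j\colon W\to W$ fixes $p$ and therefore both of its coordinates. The restrictions $j\upharpoonright M_0$ and $j\upharpoonright N$ are then elementary and have the same critical point as $j$. This correctly gives $\alpha<\crit(j)\le\gamma$, so every transitive $N$ containing an ordinal $\theta>\gamma$ has a self-embedding with critical point in $(\alpha,\theta)$.

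The failing step is the claim that the cardinal $\delta=\gamma^+$ lies below $\lambda$. Nothing you write establishes it:
\begin{itemize}
\item Under your contradiction hypothesis, the critical points coming from $M_0$ are bounded by $\gamma$. So ``$\lambda$ is a limit of critical points above $\alpha$'' is not available; it is essentially the Berkeleyness you are trying to prove.
\item Regularity of critical points does not help here.
\item Your fallback for the case $\lambda=\gamma^+$ is circular. There, the least cardinal above $\gamma$ that is $\le\lambda$ is $\lambda$ itself, which contradicts nothing.
\end{itemize}
So the case $\lambda=|\gamma|^+$ is simply not handled.

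\textbf{The fix is to go down instead of up: take $\delta=|\gamma|$.} The critical points you obtain satisfy $\omega<\crit(j)\le\gamma$, so $\gamma$ is infinite and $|\gamma+1|=\delta$. Fix a bijection $f\colon\delta\to\gamma+1$ and code $p=\langle N,M_0,f\rangle$, so that also $j(f)=f$. Suppose $\crit(j)\ge\delta$. Then for every $\xi<\delta$ we get $j(f(\xi))=j(f)(j(\xi))=f(\xi)$. Since $f$ is onto $\gamma+1$, $j$ would fix every ordinal $\le\gamma$, contradicting $\crit(j)\le\gamma$.

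Hence $\alpha<\crit(j)<\delta$, which in particular gives $\alpha<\delta$. So $j\upharpoonright N$ witnesses that $\delta\le\gamma<\lambda$ is $\alpha$-proto Berkeley, contradicting the minimality of $\lambda$. The same bijection trick is what your anticipated ``short separate check'' would need.

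For comparison, the paper gives no proof of this fact and only cites Bagaria--Koellner--Woodin.
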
 
\begin{fact}[{Bagaria, Koellner, Woodin \cite[Lemma 3.4]{bagaria2019large}}]\label{fact:proto-fix}
	A cardinal $\lambda$ is $\alpha$-proto Berkeley if and only if for all sets $a$ and all transitive $M$ with $\lambda, a \in M$ there is an elementary $j: M \to M$ such that $\alpha < \crit(j) < \lambda$ and $j(a) = a$.
\end{fact}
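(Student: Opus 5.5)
The direction from right to left is immediate: taking $a = \lambda$ (or any $a \in M$) in the condition gives an elementary $j: M \to M$ with $\alpha < \crit(j) < \lambda$, which is the definition of $\alpha$-proto Berkeley. So the work is the forward direction. My plan is to argue by a minimal-counterexample argument of the same shape as the proof of Theorem \ref{thm:proto-rank}: apply proto-Berkeleyness to a large rank initial segment, and use elementarity to show that a definable counterexample is fixed.

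Assume $\lambda$ is $\alpha$-proto Berkeley and, towards a contradiction, that there are a set $a$ and a transitive $M \ni \lambda, a$ such that no elementary $j: M \to M$ with $\alpha < \crit(j) < \lambda$ fixes $a$. First normalise $M$. Replacing $M$ by a transitive set containing it tends to make fixing $a$ easier, since $j \restriction M$ is then elementary into $j(M)$. So I would take $M = V_\xi$ with $\xi$ least such that some $a \in V_\xi$ witnesses failure for $V_\xi$. Call $\xi$ the \emph{minimal bad rank}. It is definable from $\alpha$ and $\lambda$, so the set
\[
C = \{a \in V_\xi : \text{no elementary } j: V_\xi \to V_\xi \text{ with } \alpha < \crit(j) < \lambda \text{ has } j(a) = a\}
\]
is definable from $\alpha, \lambda$. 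Next choose $\eta > \xi$ such that $V_\eta$ reflects the definitions of $\xi$ and $C$. Applying proto-Berkeleyness to $V_\eta$ gives $k: V_\eta \to V_\eta$ with $\alpha < \crit(k) < \lambda$. Since $k(\alpha) = \alpha$ and, as in Theorem \ref{thm:proto-rank}, $\lambda$ should also be fixed, we get $k(\xi) = \xi$ and $k(C) = C$. Then $k \restriction V_\xi: V_\xi \to V_\xi$ is elementary with the right critical point. If $C$ contained an element fixed by $k$, that would already be a contradiction.

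The main obstacle is exactly this last step. We must produce a single $a \in C$ that is pinned down so that $k(a) = a$, and in ZF we cannot pick a definable element of $C$ via a well-order. My first attempt would be to fold $a$ into the structure so that the property ``being the designated $a$'' becomes definable. Concretely, I would work with the structure $(V_\xi, \in, \{a\})$, or code the pair $(M,a)$ as a transitive set $N_a$ in which $a$ is definable, e.g.\ $a$ is the unique element of some fixed top-level set. Then an elementary self-map of $N_a$ must fix $a$, and its restriction to $M$ gives the required $j$.

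The delicate point is making $N_a$ transitive with $\lambda \in N_a$ while keeping $a$ definable inside it, for instance by taking $N_a = V_\xi \cup \{\{a\}\}$ adjusted to be transitive. One must also check that restricting an elementary $j: N_a \to N_a$ to $M$ yields an elementary $M \to M$, which needs $M$ to be definable in $N_a$ and fixed by $j$. If $\{a\}$ cannot be made definable this way, I would fall back to the proof of \cite[Lemma 3.4]{bagaria2019large}, which I expect uses such a coding. The necessary $k(\lambda) = \lambda$ also needs care. I expect it to follow from $\lambda$ being the least $\alpha$-proto Berkeley cardinal above the relevant parameters, or else to need to be built into the coding as well.
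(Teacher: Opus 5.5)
\textbf{Verdict: genuine gap.} Your right-to-left direction is fine. The forward direction is not complete: the argument you actually develop cannot be finished, and the idea that would work is left unexecuted.

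\textbf{Why the minimal-counterexample route fails.} It breaks at two points.
\begin{enumerate}
\item[(i)] $\alpha$-proto Berkeleyness gives $k\colon V_\eta\to V_\eta$ with $\alpha<\crit(k)<\lambda$, but says nothing about $k(\lambda)$. In Theorem \ref{thm:proto-rank} the equation $j(\lambda)=\lambda$ is part of the hypothesis. Here it is itself an instance of what you are proving (take $a=\lambda$). Also, $\lambda$ is not assumed to be least. So neither $k(\xi)=\xi$ nor $k(C)=C$ is available.
\item[(ii)] Even granting $k(C)=C$, nothing produces an element of $C$ fixed by $k$, as you observe yourself.
\end{enumerate}
Two smaller points. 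Your reduction to $M=V_\xi$ silently needs $j(M)=M$, which is again a fixing problem; you would have to fix $\langle M,a\rangle$ rather than $a$. And the coding you suggest, $V_\xi\cup\{\{a\}\}$, does not single out $a$. For limit $\xi$ it simply equals $V_\xi$, since $\{a\}\in V_\xi$ already. In general you give no reason why $\{a\}$ or $a$ would be definable there. Deferring to ``the proof of Lemma 3.4'' is not a substitute.

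\textbf{The missing ingredient.} It is Fact \ref{fact:definable}, Lemma 3.3 of Bagaria--Koellner--Woodin. The paper cites it alongside this Fact and uses it in exactly this way in the proof of Theorem \ref{thm:club-Berkeley}. It says that every set $b$ belongs to a transitive $N$ in which $b$ is definable without parameters. For instance, take a limit ordinal $\gamma$ with $b\in V_\gamma$ and let $N=V_\gamma\cup\{\gamma,\{b,\gamma\}\}$. Then:
\begin{enumerate}
\item[(a)] $N$ is transitive;
\item[(b)] $\gamma$ is its largest ordinal;
\item[(c)] $\{b,\gamma\}$ is the only element of $N$ having $\gamma$ as a member;
\item[(d)] $b$ is the other element of that pair.
\end{enumerate}

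\textbf{The direct proof.} With this, no minimality, reflection or fixing of $\lambda$ is needed. Apply the fact to $b=\langle M,a\rangle$.
\begin{enumerate}
\item[(1)] Since $\lambda\in M\subseteq N$, proto-Berkeleyness gives an elementary $j\colon N\to N$ with $\alpha<\crit(j)<\lambda$.
\item[(2)] $j$ fixes the definable element $b$, hence $j(M)=M$ and $j(a)=a$.
\item[(3)] $j\upharpoonright M\colon M\to M$ is elementary, because $(M,\in)\models\varphi(x)$ iff $(N,\in)\models\varphi^M(x)$, with all quantifiers bounded by the fixed parameter $M$.
\item[(4)] $\crit(j\upharpoonright M)=\crit(j)$, because $\crit(j)<\lambda\in M$ and $M$ is transitive.
\end{enumerate}
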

In the compactness properties witnessing Berkeleyness,  we restrict to structures which have a designated relation in the following way. We call a pair $(M,R^M)$ a \emph{relation} if  $M$ is a set and $R^M \subseteq M^2$. Let $\tau$ be a vocabulary with a designated binary relation symbol $R$. For a relation $(M, R^M)$, we say that a $\tau$-structure $A$ has \emph{relation type $(M,R^M)$} if $(A,R^A) \cong (M, R^M)$.

\begin{definition}
	Fix a logic $\cL$, cardinals $\kappa$ and $\mu$, and a relation $(M,R)$.
	\begin{enumerate}
		\item[(1)] $\cL$ is \emph{$\mu$-chain compact for relation type $(M,R)$} if for every theory $T \subseteq \cL$ which can be written as an increasing union $T = \bigcup_{\alpha < \mu} T_\alpha$, if every $T_\alpha$ has a model of relation type $(M,R)$, then also $T$ has a model of relation type $(M,R)$.
		\item[(2)] $\cL$ is \emph{$[\kappa, \mu]$-compact for relation type $(M,R)$} if for all theories $T,S \subseteq \cL$ with $|S| = \kappa$, if for every $S_0 \in \mathcal P_\mu S$ there is a model of $T \cup S_0$ of relation type $(M,R)$, then there is also a model of $T \cup S$ of relation type $(M,R)$.
	\end{enumerate}
\end{definition}

\begin{lemma}\label{lem:square-relation-type}
	Let $\cL$ be a logic, $\mu$ a regular cardinal, and $(M,R)$ a relation. Then $\cL$ is $[\mu, \mu]$-compact for relation type $(M,R)$ iff $\cL$ is $\mu$-chain compact for relation type $(M,R)$.
	\begin{proof}
		The same argument as that for Lemma \ref{lem:square-chain} works by letting all models considered have relation type $(M, R)$.
	\end{proof}
\end{lemma}

Note that the following characterisation uses first-order logic, from which we derive that the existence of a Berkeley cardinal is equivalent to a compactness property of first-order logic.
\begin{theorem}\label{thm:proto-Berkeley}
	The following are equivalent for a cardinal $\lambda$:
	\begin{enumerate}
		\item[(1)] $\lambda$ is $0$-proto Berkeley.
		\item[(2)] For every relation $(M,R)$ there is a cardinal $\mu < \lambda$ such that first-order logic is $\mu$-chain compact for relation type $(M,R)$.
	\end{enumerate}
	\begin{proof}
		Assume (2). Let $M$ be transitive with $\lambda \in M$. We seek to find an elementary $j: M \to M$ with $\crit(j) < \lambda$. For the relation $(M, \in)$, take a $\mu < \lambda$ according to (2). Consider:
		\[
		T = \text{ElDiag}(M,\in) \cup \{c_i \in c \wedge c < c_\mu \colon i < \mu \},
		\]
		which can be written as $T = \bigcup_{\beta < \mu} T_\beta$ by letting for $\beta < \mu$:
		\[
		T_\beta = 	\text{ElDiag}(M,\in) \cup \{c_i \in c \wedge c < c_\mu \colon i < \beta \}.
		\]
		Then $T_\beta$ has a model of relation type $(M,\in)$, namely $(M,\in, c^M, c_x^M)_{x \in M}$ where $c^M = \beta$. Thus, by (2), $T$ has a model $N = (N,R, c^N, c_x^N)_{x \in M}$ of relation type $(M,\in)$, i.e., there is an isomorphism $\pi:(N,R) \to (M,\in)$. Because $N \models T$, we get an elementary $j: (M,\in) \to (N,R)$ by $x \mapsto c_x^M$. Then $k = \pi \circ j: (M, \in) \to (M, \in)$ is elementary. By $T$, $c_\mu^N = j(\mu)$ has order type $> \mu$, and because $\pi$ is an isomorphism thus $k(\mu) = \pi(j(\mu))$ has order type $> \mu$. Thus, $\crit(k) \leq \mu < \lambda$. 
		
		\medskip
		
		Assume (1). Let $(M,R)$ be a relation and suppose for contradiction that for all $\mu < \lambda$ there is a first-order theory $T$ which can be written as an increasing union $T = \bigcup_{\beta < \mu} T_\beta$ such that every $T_\beta$ has a model of relation type $(M,R)$ but $T$ has no such model. Take $\eta > \lambda$ such that $V_\eta$ contains $(M,R)$ and, for every $\mu < \lambda$, a counterexample theory  $T$ as above alongside corresponding models of the $T_\beta$ for $\beta < \mu$ of relation type $(M,R)$. By $0$-proto Berkeleyness and Fact \ref{fact:proto-fix} there is $j: V_\eta \to V_\eta$ with $\crit(j) < \lambda$ and $j((M,R)) = (M,R)$. Fix $\mu = \crit(j)$ and, in $V_\eta$, consider a counterexample $T = \bigcup_{\beta < \mu} T_\beta$ corresponding to $\mu$, where the $T_\beta$ have models of relation type $(M,R)$, but $T$ has no such model. By elementarity, in $V_\eta$ and hence in $V$ we get that $j(T) = \bigcup_{\beta < j(\mu)} T_\beta^*$ for some theories $T_\beta^*$ where each $T_\beta^*$ has a model of relation type $j((M,R)) = (M,R)$. Since $\mu < j(\mu)$, then $j``T \subseteq T_\mu^*$ has such a model, which can be renamed to a model of $T$ of relation type $(M,R)$. Contradiction.		
	\end{proof}
\end{theorem}

\begin{corollary}\label{cor:existence-Berkeley}
	The following are equivalent:
	\begin{enumerate}
		\item[(1)] There is a Berkeley cardinal.
		\item[(2)] There is a cardinal $\lambda$ such that for every relation $(M,R)$ there is some $\mu < \lambda$ such that first-order logic is $\mu$-chain compact for relation type $(M,R)$.
	\end{enumerate}
	\begin{proof}
		If $\lambda$ is a Berkeley cardinal, it is also $0$-proto Berkeley and then satisfies (2) by Theorem \ref{thm:proto-Berkeley}. If $\lambda$ witnesses (2), then by Theorem \ref{thm:proto-Berkeley} it is $0$-proto Berkeley. Now the least $0$-proto Berkeley cardinal is Berkeley by Fact \ref{fact:proto-Berkeley}. 
	\end{proof}
\end{corollary}

To characterise Berkeleyness, we can use first-order infinitary logic, as well as stronger logics. 
\begin{theorem}\label{thm:Berkeley}
	The following are equivalent for a cardinal $\lambda$.
	\begin{enumerate}
		\item[(1)] $\lambda$ is Berkeley.
		\item[(2)] For every $\alpha < \lambda$ and every relation $(M,R)$ there is a cardinal $\alpha < \mu < \lambda$ such that $\cL_{\mu \omega}$ is $\mu$-chain compact for relation type $(M,R)$.
		\item[(3)] For every $\alpha < \lambda$ and every relation $(M,R)$ there is a cardinal $\alpha < \mu < \lambda$ such that $\cL^2_{\mu \omega}$ is $\mu$-chain compact for relation type $(M,R)$.
		\item[(4)] For every $\alpha < \lambda$ and every relation $(M,R)$ there is a cardinal $\alpha < \mu < \lambda$ such that $\sln_{\mu \omega}$ is $\mu$-chain compact for relation type $(M,R)$.
	\end{enumerate}
	\begin{proof}
		That (1) implies (2)--(4) can be seen similar to the proof (1) to (2) of Theorem \ref{thm:proto-Berkeley}, using $\alpha$-proto Berkeleyness. To accommodate for the stronger logics, simply chose a $V_\eta$ which computes their satisfaction relation correctly.
		
		That any of (2)--(4) imply (1) is similar to (2) implies (1) of the same theorem, using the fact that $\cL_{\mu \omega}$ and the stronger logics can define all ordinals $< \mu$ to push the critical point beyond $\alpha$. 
	\end{proof}
\end{theorem}

\subsection{Club Berkeley cardinals}
A cardinal $\lambda$ is \emph{club Berkeley} if $\lambda$ is regular and for all clubs $C \subseteq \lambda$ and all transitive $M$ with $\lambda \in M$ there is an elementary $j: M \to M$ with $\crit(j) \in C$ (cf. \cite{bagaria2019large}).

We use the following fact.
\begin{fact}[{Bagaria, Koellner, Woodin \cite[Lemma 3.3]{bagaria2019large}}]\label{fact:definable}
	For any set $a$ there is a transitive set $M$ such that $a \in M$ and $a$ is definable in $M$ without parameters.
\end{fact}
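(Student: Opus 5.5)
The plan is to take for $M$ the smallest transitive set that has $a$ as an element, namely
\[
M = \operatorname{trcl}(\{a\}) = \{a\} \cup \operatorname{trcl}(a),
\]
and to show that $a$ is the $\in$-maximal element of $M$. The defining formula will then be $\psi(x) \equiv \neg \exists y\,(x \in y)$. The argument uses only Foundation and the existence of transitive closures, so it goes through in ZF, which is the setting of this section.

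\textbf{Transitivity.} $M$ is transitive by construction, and $a \in M$.

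\textbf{$a$ satisfies $\psi$ in $M$.} Suppose $a \in y$ for some $y \in M$. Since $a \notin a$ by Foundation, we have $y \neq a$, so $y \in \operatorname{trcl}(a)$. As $\operatorname{trcl}(a)$ is transitive, this gives $a \in \operatorname{trcl}(a)$. This contradicts the well-foundedness of $\in$, since it produces a finite descending $\in$-chain from $a$ back to $a$; equivalently, it gives $\rk(a) < \rk(a)$. So $(M,\in) \models \psi(a)$.

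\textbf{No other element satisfies $\psi$ in $M$.} Let $x \in M$ with $x \neq a$. Then $x \in \operatorname{trcl}(a) = \bigcup_{n} \bigcup^n a$, so for some $n$ we have $x \in \bigcup^n a$. If $n = 0$, then $x \in a$ and $a \in M$. If $n > 0$, then $x \in z$ for some $z \in \bigcup^{n-1} a \subseteq \operatorname{trcl}(a) \subseteq M$. In either case $(M,\in) \models \neg\psi(x)$.

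Hence $a$ is the unique element of $M$ satisfying $\psi$, so $a$ is definable in $(M,\in)$ without parameters. The only step needing any care is the second one, ruling out $a \in \operatorname{trcl}(a)$, and this is a direct consequence of Foundation. I therefore do not expect a genuine obstacle in this proof.
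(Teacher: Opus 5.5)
Your proof is correct and works in ZF. $\operatorname{trcl}(\{a\})$ is transitive, and Foundation rules out $a \in \operatorname{trcl}(a)$, so $a$ is the unique element of $\operatorname{trcl}(\{a\})$ satisfying $\neg\exists y\,(x \in y)$.

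The paper gives no proof of its own; it only cites Bagaria--Koellner--Woodin. Your transitive-closure argument is the standard way to establish this fact. Even this minimal choice of $M$ is enough for how the paper uses the fact in Theorem~\ref{thm:club-Berkeley}. There, only transitivity is needed, together with the fact that every elementary $j\colon N \to N$ fixes a parameter-free definable element. The correctness about satisfaction comes from $V_\eta \in N$, not from $N$ itself.
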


\begin{theorem}\label{thm:club-Berkeley}
	The following are equivalent for a regular cardinal $\lambda$:
	\begin{enumerate}
		\item[(1)] $\lambda$ is club Berkeley.
		\item[(2)] For all clubs $C \subseteq \lambda$ and all relations $(M,R)$ there is $\mu \in C$ such that $\cL_{\mu \omega}$ is $\mu$-chain compact for relation type $(M,R)$.
		\item[(3)] For all clubs $C \subseteq \lambda$ and all relations $(M,R)$ there is $\mu \in C$ such that $\cL^2_{\mu \omega}$ is $\mu$-chain compact for relation type $(M,R)$.
	\end{enumerate}
	\begin{proof}
		Clearly (3) implies (2). Assume (2), let $C \subseteq \lambda$ be club, and $M$ transitive with $\lambda \in M$ to show (1). Take a $\mu \in C$ according to (2) and the relation $(M,\in)$. The same argument as for Theorem \ref{thm:Berkeley} lets us derive an elementary $j: M \to M$ with $\crit(j) = \mu$. Because $\mu \in C$, we are done.
		
		\medskip
		
		Assume (1) and let $C \subseteq \lambda$ be club to show (3). Towards a contradiction suppose that there is a relation $(M,R)$ such that for all $\mu \in C$ there is an increasing $\cL_{\mu \omega}$-theory $T = \bigcup_{\beta < \mu} T_\beta$ such that every $T_\beta$ has a model of relation type $(M,R)$ but $T$ has no such model. Let $\eta > \mu$ be large enough to contain such a counterexample theory alongside models of relation type $(M,R)$ of the filtrating theories for every $\mu \in C$. By Fact \ref{fact:definable}, let $N$ be transitive such that $((M,R), V_\eta) \in N$ and is definable in $N$ without parameters. Then $V_\eta \subseteq N$. In particular, $\lambda \in M$. By (1), let $j: N \to N$ with $\mu := \crit(j) \in C$. Because $((M,R), V_\eta)$ is definable in $N$, $j$ fixes $(M,R)$ and $V_\eta$. Because $V_\eta$ contains counterexample theories alongside the corresponding models of the filtrating theories, $N$ also contains the counterexample $T = \bigcup_{\beta < \mu} T_\beta$ corresponding to $\mu$ along with models of relation type $(M,R)$ of the $T_\beta$ in $V_\eta$. Note that $N$ is correct about $\cL^2_{\mu \omega}$-satisfaction for members of $V_\eta$, because $V_\eta \subseteq N$ and so $N$ contains the full power set of these models. So $N$ understands this situation. By elementarity, thus $N$ believes that $j(T)$ is filtrated by an increasing sequence of theories $(T_\beta^* \colon \beta < j(\mu))$ and each $T_\beta^*$ has a model of relation type $j((M,R)) = (M,R)$ in $j(V_\eta) = V_\eta$. Then as $\mu < j(\mu)$, $T_\mu^* \supseteq j``T$ has a model $A$ of relation type $(M,R)$ in $V_\eta \subseteq N$. Again, $N$ is correct about $A \in V_\eta \subseteq N$ satisfying the $\cL^2_{\mu \omega}$-theory $j``T$. Thus, $A$ can be renamed to model of $T$ in $V$. Contradiction.
	\end{proof}
\end{theorem}

\section{Exacting cardinals}\label{sec:exacting}

A cardinal $\lambda$ is \emph{exacting} if for every $\zeta > \lambda$ there is an elementary substructure $X \prec V_\zeta$ such that $V_\lambda \cup \{ \lambda \} \subseteq X$ and an elementary embedding $j: X \to V_\zeta$ with $j(\lambda) = \lambda$ and $\crit(j) < \lambda$ (cf. \cite{aguilera2024large}).

Note that an embedding witnessing exactingness is similar to one witnessing rank-Berkeleyness, just that the domain of the embedding is shrunk to some elementary substructure of $V_\zeta$. In this sense, exacting cardinals can be conisdered to be an AC compatible version of rank-Berkeley cardinals. There are many equivalent characterisations of exacting cardinals. We will make use of the following two.
\begin{fact}[Aguilera, Bagaria, Goldberg, Lücke \cite{aguilera2025large}]\label{fact:exacting}
	The following are equivalent for a cardinal $\lambda$:
	\begin{enumerate}
		\item[(1)] $\lambda$ is exacting.
		\item[(2)] For every $\alpha < \lambda < \zeta$ there is an elementary substructure $X \prec V_\zeta$ such that $V_\lambda \cup \{ \lambda \} \subseteq X$ and an elementary embedding $j: X \to V_\zeta$ with $j(\lambda) = \lambda$ and $\alpha < \crit(j) < \lambda$. 
		\item[(3)] For $\zeta$ the smallest member of $C^{(2)}$ above $\lambda$ there is an elementary substructure $X \prec V_\zeta$ such that $V_\lambda \cup \{ \lambda \} \subseteq X$ and an elementary embedding $j: X \to V_\zeta$ with $j(\lambda) = \lambda$ and $\crit(j) < \lambda$.
	\end{enumerate}
\end{fact}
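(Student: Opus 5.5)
The plan is to prove the cycle $(1)\Rightarrow(3)\Rightarrow(2)\Rightarrow(1)$.

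\textbf{The trivial implications.} $(2)\Rightarrow(1)$ is immediate: take $\alpha=0$. $(1)\Rightarrow(3)$ is also immediate, since (3) is the instance of (1) at one particular $\zeta>\lambda$, namely the least $\zeta\in C^{(2)}$ above $\lambda$. All the content is in $(3)\Rightarrow(2)$, which is a reflection argument.

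\textbf{Reducing (2) to one bad pair.} Fix $\zeta_0$ as in (3) and suppose (2) fails. For $\zeta>\lambda$, let
\[
S_\zeta=\{\kappa<\lambda \colon \exists X\prec V_\zeta \text{ with } V_\lambda\cup\{\lambda\}\subseteq X \text{ and } \exists j\colon X\to V_\zeta \text{ elementary with } j(\lambda)=\lambda,\ \crit(j)=\kappa\}.
\]
Failure of (2) means that $S_\zeta$ is bounded in $\lambda$ for some $\zeta>\lambda$. The assertion ``there is $\zeta>\lambda$ such that $S_\zeta$ is bounded below $\lambda$'' is $\Sigma_2$ in the parameter $\lambda$, because it only quantifies over $V_{\zeta+\omega}$. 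Since $V_{\zeta_0}\prec_{\Sigma_2}V$, there is such a $\zeta<\zeta_0$. Let $\zeta'$ be the least one, and set $\beta=\sup S_{\zeta'}<\lambda$. Both $\zeta'$ and $\beta$ are then definable in $V_{\zeta_0}$ from $\lambda$. Moreover $V_{\zeta_0}$ computes $S_{\zeta'}$ correctly, since $S_{\zeta'}$ is determined by subsets of $V_{\zeta'}$ and embeddings between them, all of which lie in $V_{\zeta_0}$.

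\textbf{Transferring the definitions through $j$.} Now take $X\prec V_{\zeta_0}$ and $j\colon X\to V_{\zeta_0}$ from (3), with $\kappa=\crit(j)<\lambda$. Because $X\prec V_{\zeta_0}$ and $\lambda\in X$, the objects $\zeta'$, $S_{\zeta'}$ and $\beta$ all belong to $X$. Because $j(\lambda)=\lambda$, they are all fixed by $j$.

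\textbf{Restricting $j$ to $V_{\zeta'}$.} Next consider $X'=X\cap V_{\zeta'}$ and $j\restriction X'$. Since $\zeta'\in X$ and $j(\zeta')=\zeta'$, the map $j\restriction X'$ is an elementary embedding $X'\to V_{\zeta'}$. Also $X'\prec V_{\zeta'}$ and $V_\lambda\cup\{\lambda\}\subseteq X'$. So $j\restriction X'$ witnesses that $\kappa\in S_{\zeta'}$, and in particular $\kappa\le\beta$.

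\textbf{Pushing the critical points up.} The aim is now to show that $S_{\zeta'}$ is closed under $j$ on the relevant points. First, $\kappa\in S_{\zeta'}$ holds in $V_{\zeta_0}$. By elementarity of $j$, applied to a witness $k\in X$ with $\crit(k)=\kappa$, we get $j(\kappa)\in j(S_{\zeta'})=S_{\zeta'}$, so $j(\kappa)\le\beta$. Repeating this along the sequence $\kappa_0=\kappa$, $\kappa_{n+1}=j(\kappa_n)$ gives $\kappa_n\in S_{\zeta'}$ for all $n$. Here one has to check at each step that $\kappa_n\in X$; this holds because $\kappa_n<\lambda$ and $V_\lambda\subseteq X$. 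Hence $\sup_n\kappa_n\le\beta<\lambda$.

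\textbf{The main obstacle: a contradiction from the fixed point.} Set $\gamma=\sup_n\kappa_n\le\beta<\lambda$. The remaining step is to derive a contradiction from this fixed point below $\lambda$. I would argue via Kunen's inconsistency, using $V_{\gamma+2}\subseteq V_\lambda\subseteq X$. The map $j$ restricts to a nontrivial elementary embedding $V_{\gamma+2}\to V_{j(\gamma)+2}=V_{\gamma+2}$, because $j(\gamma)=\gamma$ by continuity at $\omega$-cofinality. In ZFC this is impossible by Kunen's theorem. Hence $S_{\zeta'}$ is unbounded in $\lambda$, contradicting the choice of $\zeta'$.

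The delicate points in this last step are twofold. One must verify that the restriction really maps $V_{\gamma+2}$ into itself. One must also confirm that the paper's standing assumption of ZFC in this section is in force, so that Kunen's theorem applies. If that route turns out not to work, the fallback is to compose $k\in X$ with $j$ directly, so as to produce critical points above $\beta$.
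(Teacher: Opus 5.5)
The paper gives no proof of this Fact; it imports it from Aguilera--Bagaria--Goldberg--L\"ucke, so there is no in-paper argument to match yours against. Your cycle is correct in ZFC, which is the paper's standing assumption in Section~\ref{sec:exacting}, and it is the natural argument.
\begin{itemize}
\item The failure of (2) is $\Sigma_2$ in $\lambda$, since ``$y=V_{\zeta+\omega}$'' is $\Pi_1$. It therefore reflects below the least $\zeta_0\in C^{(2)}$ above $\lambda$. This is exactly why $C^{(2)}$ appears in (3).
\item The least bad $\zeta'$ and $\beta=\sup S_{\zeta'}$ are definable from $\lambda$ in $V_{\zeta_0}$. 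Hence they lie in $X$ and are fixed by $j$.
\item Restricting $j$ to $X\cap V_{\zeta'}$ puts $\kappa=\crit(j)$ into $S_{\zeta'}$.
\item Kunen's theorem, applied to $j\upharpoonright V_{\gamma+2}$, then rules out a critical sequence bounded by $\beta<\lambda$.
\end{itemize}

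The minimal-counterexample and restriction steps are the same technique the paper uses for Lemma~\ref{lem:multiexacting}. There the paper invokes clause (2) of this very Fact to get $\crit(j)>\alpha$. What you add is the Kunen step: the critical sequence of any such $j$ is cofinal in $\lambda$, so no ordinal $\beta<\lambda$ fixed by $j$ can lie above $\crit(j)$. This is the one place where choice is used, which fits the paper stating this Fact only in its ZFC part.

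Two simplifications:
\begin{itemize}
\item You do not need closure of $S_{\zeta'}$ under $j$. From $\kappa\le\beta$ and $j(\beta)=\beta$ you get $\kappa<\beta$. Induction using only that $j$ preserves order on ordinals then gives $\kappa_n<\beta$ for all $n$.
\item The ``delicate points'' resolve directly, so the fallback is unnecessary. Since $s=\langle\kappa_n\colon n<\omega\rangle\in V_\lambda\subseteq X$ and $j(s)=\langle\kappa_{n+1}\colon n<\omega\rangle$, you get $j(\gamma)=\gamma$ and hence $j(V_{\gamma+2})=V_{\gamma+2}$. Elementarity of $j\upharpoonright V_{\gamma+2}$ follows by evaluating satisfaction for $V_{\gamma+2}$ inside $V_{\zeta_0}$, exactly as in your restriction step.
\end{itemize}
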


We give yet another characterisation of exactingness, which shows that exactingness gives elementary embeddings as above for \emph{every} substructure of $V_\zeta$ for $\zeta > \lambda$. This makes the notion more directly applicable to derive model-theoretic consequences.

\begin{lemma}\footnote{This result is joint with Hope Duncan and Asaf Karagila. I want to thank Hope and Asaf for conversations on the axiom of choice and on exacting cardinals during which we proved this characterisation. An earlier version of this article showed that the compactness properties of Theorem \ref{thm:exacting-compactness} are equivalent to (2) of this lemma, but was missing the equivalence of (2) to exactingness. Instead, it contained a proof that (2) follows from $\lambda$ being \emph{ultraexacting} (cf. \cite{aguilera2024large} for the definition of ultraexactingness).}\label{lem:multiexacting}
	Let $\lambda$ be a cardinal. The following are equivalent:
	\begin{enumerate}
		\item[(1)] $\lambda$ is exacting.
		\item[(2)] For every $\alpha < \lambda < \zeta$ and every elementary substructure $X \prec V_\zeta$ with $|X| = \lambda$ and $V_\lambda \cup \{\lambda\} \subseteq V_\zeta$ there is an elementary embedding $j: X \to V_\zeta$ with $\alpha < \crit(j) < \lambda$ and $j(\lambda) = \lambda$.
		\end{enumerate}
		\begin{proof}
			Clearly (2) implies that $\lambda$ is exacting. So it is sufficient to show that (1) implies (2). Assume (1), let $\alpha < \lambda$, and suppose towards a contradiction that $\zeta > \lambda$ is smallest such that there is an elementary substructure $X \prec V_\zeta$ of size $|X|=\lambda$ containing $V_\lambda \cup \{\lambda\}$ but no elementary $k: X \to V_\zeta$ with $\alpha < \crit(k) < \lambda$ and $k(\lambda) = \lambda$. Let $\eta > \zeta$ be such that $V_\eta$ is sufficiently correct to reflect the definition of $\zeta$. By exactingness and Fact \ref{fact:exacting}, there is an elementary substructure $Y \prec V_\eta$ with $V_\lambda \cup \{\lambda\} \subseteq Y$ and an elementary embedding $j: Y \to V_\eta$ such that $\alpha < \crit(j) < \lambda$ and $j(\lambda) = \lambda$. Note that since $\zeta$ is definable from $\lambda$, we have $\zeta, V_\zeta \in Y$. Because $V_\eta$ beliefs there is a substructure of $V_\zeta$ which constitutes counterexample to (2), by elementarity of $Y$ there is such a counterexample $X \in Y$, i.e., $Y$ satisfies:
			\begin{enumerate}
				\item[($\ast$)] $X \prec V_\zeta$, $|X| = \lambda$, $V_\lambda \cup \{\lambda\} \subseteq X$, and there is no elementary embedding $k: X \to V_\zeta$ with $\alpha < \crit(k) < \lambda$ and $k(\lambda) = \lambda$. 
			\end{enumerate}
			Again by elementarity, this all holds in $V_\eta$ and thus in $V$. Now note that because $|X| = \lambda$, $Y$ has a surjection $f: \lambda \to X$. Since $\lambda \subseteq Y$, it follows that $X = f``\lambda \subseteq Y$. In particular, we can consider the restriction $k = j \upharpoonright X$. Note that by definability of $\zeta$ from $\lambda$ and $\alpha$, and because $j(\lambda) = \lambda$ and $j(\alpha) = \alpha$, we get $j(\zeta) = \zeta$. Thus, $k: X \to V_\zeta$. Clearly, $\alpha < \crit(j) = \crit(k) < \lambda$ and $k(\lambda) = j(\lambda) = \lambda$. By elementarity of $j$, it follows that $V_\eta$ believes that $j(X)$ is an elementary substructure of $V_{j(\zeta)} = V_\zeta$. Hence, the following chain of equivalences establishes that $k$ is elementary.
			\[
			\begin{split}
				X \models \varphi(a) & \text{ iff }  V_\eta \models ``X \models \varphi(a)" \\
				& \text{ iff } Y \models ``X \models \varphi(a)" \\
				& \text{ iff } V_\eta \models ``j(X) \models \varphi(j(a))" \\
				& \text{ iff } V_\eta \models ``V_\zeta \models \varphi(j(a))" \\
				& \text{ iff } V_\zeta \models \varphi(j(a)) \\
				& \text{ iff } V_\zeta \models \varphi(k(a)).
			\end{split}
			\]
			But then the existence of $k$ contradicts that $(\ast)$ holds in $V$. 
		\end{proof}
\end{lemma}

\subsection{Compactness for rank types with theory size restriction}

The following variants to $\mu$-chain compactness and $[\kappa,\mu]$-compactness for type $(\alpha,\beta)$ characterise exactingness. Note that the only difference to the earlier notions is the added restriction of the size of $T$. 

	\begin{definition}
	Fix a logic $\cL$, cardinals $\kappa, \mu$, and $\lambda$, and ordinals $\alpha$ and $\beta$.
	\begin{enumerate}
		\item[(1)] $\cL$ is \emph{$(\mu,\lambda)$-chain compact for rank type $(\alpha, \beta)$} if for every theory $T \subseteq \cL$ with $|T| = \lambda$ which can be written as an increasing union $T = \bigcup_{\alpha < \mu} T_\alpha$, if every $T_\alpha$ has a model of rank type $(\alpha, \beta)$, then also $T$ has a model of rank type $(\alpha, \beta)$.
		\item[(2)] $\cL$ is \emph{$[\kappa, \mu, \lambda]$-compact for rank type $(\alpha, \beta)$} if for all theories $T,S \subseteq \cL$ with $|T| = \lambda$ and $|S| = \kappa$, if for every $S_0 \in \mathcal P_\mu S$ there is a model of $T \cup S_0$ of rank type $(\alpha, \beta)$, then there is also a model of $T \cup S$ of rank type $(\alpha, \beta)$.
	\end{enumerate}
\end{definition}

The equivalence between $\mu$-chain compactness and $[\mu,\mu]$-compactness carries over:
\begin{lemma}\label{lem:square-chain-type-size}
	Let $\cL$ be a logic and $\mu$ a regular cardinal. Then $\cL$ is $[\mu, \mu,\lambda]$-compact for rank type $(\alpha, \beta)$ iff $\cL$ is $(\mu,\lambda)$-chain compact for rank type $(\alpha, \beta)$.
	\begin{proof}
		The same argument as that for Lemma \ref{lem:square-chain} works simply by restricting the size of the theories and rank types of the models appropriately.
	\end{proof}
\end{lemma}

\begin{theorem}\label{thm:exacting-compactness}
	The following are equivalent for $\lambda = \beth_\lambda$:
	\begin{enumerate}
		\item[(1)] $\lambda$ is exacting. 
		\item[(2)] For $\zeta > \lambda$ the smallest member of $C^{(2)}$ above $\lambda$ there is a cardinal $\mu < \lambda$ such that $\cL^2$ is $(\mu,\lambda)$-chain compact for rank type $(\zeta, \lambda)$. 
		\item[(3)] For every $\zeta > \lambda$ there is a cardinal $\mu < \lambda$ such that $\cL^2$ is $(\mu,\lambda)$-chain compact for rank type $(\zeta, \lambda)$. 
		\item[(4)] For every $\alpha < \lambda < \zeta$ there is a cardinal $\alpha < \mu < \lambda$ such that $\cL^2_{\mu \omega}$ is $(\mu,\lambda)$-chain compact for rank type $(\zeta, \lambda)$. 
		\item[(5)] For every natural number $n$ and every $\zeta > \lambda$ there is a cardinal $\mu < \lambda$ such that $\sln$ is $(\mu,\lambda)$-chain compact for rank type $(\zeta, \lambda)$. 
		\item[(6)] For every natural number $n$ and every $\alpha < \lambda < \zeta$ there is a cardinal $\alpha < \mu < \lambda$ such that $\sln_{\mu \omega}$ is $(\mu,\lambda)$-chain compact for rank type $(\zeta, \lambda)$. 
	\end{enumerate}
	\begin{proof}
		Clearly, (2) is implied by all of (3)--(6). Moreover, (6) implies all of (2)--(5). So it is sufficient to show that (1) implies (6) and that (2) implies (1).
		
		\medskip
		
		For (1) implies (6), let $n$ be any natural number and $\alpha < \lambda$. Towards a contradiction, suppose that there is a smallest $\zeta > \lambda$ such that for all $\mu$ with $\alpha < \mu < \lambda$ there is a theory $T = \bigcup_{\beta < \mu} T_\beta \subseteq \cL^2_{\mu \omega}$ of size $|T| = \lambda$ such that all $T_\beta$ have models of type $(\zeta, \lambda)$ but $T$ has no such model. For every $\alpha < \mu < \lambda$ fix one such theory $T_\mu$ of size $|T_\mu| = \lambda$ with its corresponding decomposition $T_\mu = \bigcup_{\beta < \mu} T_{\beta,\mu}$. The proof proceeds similar to those of the characterisations of rank-Berkeleyness and Berkeleyness, using that we can put all these theories into one elementary substructure of an appropriate $V_\eta$. Fix an $\eta > \zeta$ such that $V_\eta$ reflects the definition of $\zeta$ from $\alpha$ and $\lambda$, is correct about $\sln_{\mu \omega}$-satisfaction for $\mu < \lambda$, and contains for all $\mu < \lambda$ all the $T_\mu$ and models $A_{\beta,\mu}$ of the $T_{\beta,\mu}$ of type $(\zeta, \lambda)$. Let $X\prec V_\eta$ with $V_\lambda \cup \{\lambda\} \cup \bigcup_{\alpha < \mu < \lambda} T_\mu \cup \{T_\mu \colon \alpha < \mu < \lambda\} \cup \{T_{\beta,\mu} \colon \alpha < \mu < \lambda, \beta < \mu\} \cup \{A_{\beta,\mu} \colon \alpha < \mu < \lambda, \beta < \mu\} \subseteq X$ of size $|X| = \lambda$. Note that we can chose $X$ of size $\lambda$ because all the $T_\mu$'s have size $\lambda$. Because $\lambda$ is exacting, by Lemma \ref{lem:multiexacting} there is an elementary $j: X \to V_\eta$ such that $j(\lambda) = \lambda$ and $\alpha < \crit(j) < \lambda$. Definability of $\zeta$ from $\alpha$ and $\lambda$, correctness of $V_\eta$, and elementarity of $X$ as a substructure of $V_\eta$ together imply $\zeta \in X$. Then, again by correctness of $\eta$ and definability of $\zeta$ from $\alpha$ and $\lambda$, and because $j(\alpha) = \alpha$ and $j(\lambda) = \lambda$, it follows that also $j(\zeta) = \zeta$. Now fix $\mu = \crit(j)$. Then $V_\eta$ verifies that $T_\mu$ is the increasing union of the $T_{\beta,\mu}$, each $T_{\beta,\mu}$ has a model of type $(\zeta,\lambda)$, but $T_\mu$ has no such model. By elementarity of $X$ as a substructure, the same holds in $X$. By elementarity of $j$, for some sequence of theories $(T_\beta^* \colon \beta < j(\mu))$, we get that $V_\eta$ satisfies: 
		\begin{quote}
			$j(T_\mu) = \bigcup_{\beta < j(\mu)} T_\beta^*$ is an increasing union and every $T_\beta^*$ has a model of type $(j(\zeta), j(\lambda)) = (\zeta, \lambda)$, but $j(T_\mu)$ has no such model.
		\end{quote}
		Note that $j``T_\mu \subseteq T_\mu^*$. Since $\mu < j(\mu)$, thus $V_\eta$ has a model $A$ of $j``T_\mu$ of type $(\zeta, \lambda)$. Because $\crit(j) = \mu$, $j``T_\mu$ is really an $\cL^2_{\mu \omega}$-theory. Because $V_\eta$ is correct about $\cL^2_{\mu \omega}$ satisfaction and types of models, $A$ is really a model of $j``T_\mu$ of rank type $(\zeta, \lambda)$ and thus can be renamed to a model of $T_\mu$ of rank type $(\zeta,\lambda)$. But we assumed that there is no such model. Contradiction.
		
		\medskip
		
		For (2) implies (1), let $\zeta > \lambda$ be the smallest member of $C^{(2)}$ above $\lambda$, and consider any $X \prec V_\zeta$ of size $|X| = \lambda$ with $V_\lambda \cup \{\lambda\}\subseteq X$. By Fact \ref{fact:exacting}, it is sufficient to find an elementary $j: X \to V_\zeta$ with $j(\lambda) = \lambda$ and some critical point $\crit(j) < \lambda$. Take $\mu < \lambda$ according to (3). Consider the theory:
		\[\begin{split}
			T =  &\text{ ElDiag}(X, \in) \cup \{\Phi \} \cup \{c_i < c \wedge c < c_\mu \colon i < \mu \} \\ & \cup \{\forall x(P(x) \leftrightarrow \rk(x)<c_\lambda) \}.
			\end{split}
		\]
		Note that because $|X| = \lambda$, also $|\text{ElDiag}(X)| = \lambda$, and thus $|T| = \lambda$. As usual, we can write $T = \bigcup_{\beta < \mu} T_\beta$ considering those bits of $T$ that contain $``c_i < c \wedge c < c_\mu"$ for $i < \beta$. Then $T_\beta$ is satisfiable by a model of rank type $(\zeta, \lambda)$, namely by $A = (V_\zeta, \in, P^A, d^A, c_x^A)_{x \in V_\zeta}$ letting $P^A = V_\lambda$, $c^A = \beta$, and $c_x^A = x$. Note that this satisfies $T \supseteq \text{ElDiag}(X)$ because $X \prec V_\zeta$. Thus, by (2), $T$ has a model $B$ of rank type $(\zeta, \lambda)$. We have $j: X \to B$ elementary with $x \mapsto c_x^B$. Because $B \models \Phi$, without loss of generality $B = V_\eta$ for some $\eta$. The usual argument gives $j(\lambda) = \lambda$ and $\crit(j) \leq \mu < \lambda$.
	\end{proof}
\end{theorem}

We derive yet another characterisation of exactingness of $\lambda$ from this theorem, which shows for fixed $\zeta> \lambda$, all substructures of $V_\zeta$ admit elementary embeddings with the same critical point. Moreover, the uniform critical point can be chosen arbitrarily high below $\lambda$.
\begin{corollary}\label{cor:char-exacting}
	A cardinal $\lambda$ is exacting iff for every $\alpha < \lambda < \zeta$ there is $\alpha < \mu < \lambda$ such that for every $X \prec V_\zeta$ with $|X| = \lambda$ and $V_\lambda \cup \{\lambda\} \subseteq X$ there is $j: X \to V_\zeta$ with $j(\lambda) = \lambda$ and $\crit(j) = \mu$.
	\begin{proof}
		For the substantial direction, let $\alpha < \lambda < \zeta$ and take a $\mu$ according to (4) of Theorem \ref{thm:exacting-compactness}. We claim that $\mu$ is as desired. So let $X \prec V_\zeta$ be any elementary substructure with $|X| = \lambda$ and $V_\lambda \cup \{\lambda\} \subseteq X$. Consider:
		\[
		\begin{split}
			T = & \text{ ElDiag}(X, \in) \cup \{\Phi \} \cup \{c_i \in d \wedge |d| < c_\mu \colon i < \mu \} \\ & \cup \{``P \text{ contains exactly the sets of rank } < c_\lambda" \} \cup \{\sigma_i(c_i) \colon i < \mu \}.
		\end{split}
		\]
		The theory is as the one considered in the proof of the above theorem, just the last part is added in which we use $\cL_{\mu \omega}$ to define every ordinal below $\mu$. Using (4), the same argument as before gives us an elementary embedding $j: X \to V_\zeta$ with $\crit(j) \leq \mu$. Because we included definitions of the ordinals below $\mu$, we now get that $\crit(j) = \mu$.
	\end{proof}
\end{corollary}

If for $\zeta > \lambda$, the cardinal $\mu < \lambda$ is as in the characterisation above (i.e., for every $X \prec V_\zeta$ with $|X| = \lambda$ and $V_\lambda \cup \{\lambda\} \subseteq X$ there is $j: X \to V_\zeta$ with $j(\lambda) = \lambda$ and $\crit(j) = \mu$), let us say that $\mu$ is \emph{critical for $\lambda$'s exactingness at $\zeta$}. Exacting cardinals come with $\mu$ which are critical for $\lambda$'s exactingness at many $\zeta$.

\begin{corollary}\label{cor:char-exacting2}
	A cardinal $\lambda$ is exacting iff for every $\alpha < \lambda$ there is $\alpha < \mu < \lambda$ such that the class of ordinals $\zeta$ such that $\mu$ is critical for $\lambda$'s exactingness at $\zeta$ is unbounded.
	\begin{proof}
		This follows directly from the previous corollary as there is a proper class of ordinals $\zeta > \lambda$ but only $\lambda$ many candidates for cardinals being critical for $\lambda$'s exactingness at $\zeta$.
	\end{proof}
\end{corollary}
We are operating close to inconsistency here, as the considered class cannot be closed.
\begin{proposition}\label{prop:exacting-inconsistent}
	There are no $\mu < \lambda$ such that for a club class $C$ of ordinals, $\mu$ is critical for $\lambda$'s exactingness at every $\zeta \in C$. In particular, there are no $\mu < \lambda$ such that for every $\zeta > \lambda$ and every $X \prec V_\zeta$ with $|X| = \lambda$ and $V_\lambda \cup \{\lambda\} \subseteq X$ there is an elementary embedding $j: X \to V_\zeta$ with $j(\lambda) = \lambda$ and $\crit(j) = \mu$.
	\begin{proof}
		Suppose that $\mu$ is smallest such that the class $C$ of ordinals $\zeta$ such that $\mu$ is critical for $\lambda$'s exactingness at $\zeta$ is club. Because $C$ is club, by the reflection theorem there is some $\eta \in C$ such that $V_\eta$ is correct about this definition and so, in $V_\eta$, $\mu$ is definable as the smallest cardinal with the above property. Take any elementary substructure $X \prec V_\eta$ of size $|X| = \lambda$ and with $V_\lambda \cup \{\lambda\} \subseteq X$. Because $\eta \in C$ there is some elementary embedding $j: X \to V_\eta$ with $j(\lambda) = \lambda$ and $\crit(j) = \mu$. Further, by elementarity of $X$ as a substructure, $\mu$ is also definable from $\lambda$ in $X$. But then, since $j(\lambda) = \lambda$, it follows that $j(\mu) = \mu$. This contradicts that $\mu = \crit(j)$.
	\end{proof}
\end{proposition}

The following is reminiscent of Makowsky's result on VP (cf. Fact \ref{fact:makowsky}).
\begin{corollary}\label{cor:class-exacting}
	There are unboundedly many exacting cardinals if and only if for every logic $\cL$ there is a cardinal $\lambda = \beth_\lambda$ such that for every $\alpha < \lambda < \zeta$ there is $\alpha < \mu < \lambda$ such that $\cL$ is $(\mu,\lambda)$-chain compact for rank type $(\zeta, \lambda)$. 
	\begin{proof}
		If the assumption holds and $\cL$ is some logic, take some natural number $n$ and a cardinal $\kappa$ such that $\cL \leq \sln_{\kappa \omega}$. Take an exacting cardinal $\lambda > \kappa$. We show that $\lambda$ witnesses the conclusion. Let $\alpha < \lambda$. By Theorem \ref{thm:exacting-compactness} there is $\mu$ with $\max({\kappa,\alpha}) < \mu < \lambda$ such that $\sln_{\mu \omega}$ is $(\mu,\lambda)$-chain compact for rank type $(\zeta,\lambda)$. Then also $\cL \leq \sln_{\mu \omega}$ is $(\mu,\lambda)$-chain compact for rank type $(\zeta,\lambda)$. 
		
		\medskip
		
		For the other direction, if $\lambda$ witnesses the conclusion for $\cL = \cL^2_{\kappa \omega}$, then $\lambda$ is exacting because it witnesses Theorem \ref{thm:exacting-compactness}.(2). So it is sufficient that then also $\lambda \geq \kappa$. So suppose $\lambda < \kappa$. For $\zeta > \kappa$, take $\mu < \lambda$ such that $\cL$ is $(\mu,\lambda)$-chain compact for rank type $(\zeta, \lambda)$. Take an elementary substructure $X \prec V_\zeta$ of size $|X| = \lambda$ and $V_\lambda \cup \{\lambda\} \subseteq X$. Note that we have the formula $\sigma_\mu(x)$ defining $\mu$ available because $\mu < \lambda < \kappa$. The theory
		\[\begin{split}
			T =  &\text{ ElDiag}(X, \in) \cup \{\Phi \} \cup \{c_i < c \wedge c < c_\mu \colon i < \mu \} \\ & \cup \{\forall x(P(x) \leftrightarrow \rk(x)<c_\lambda) \} \cup \{\sigma_\mu(c_\mu)\}
		\end{split}
		\]
		has no model, because it forces $c_\mu$ to both have and not have order type $\mu$. But if $T_\beta$ for $\beta < \mu$ as usual contains those bits of $T$ which have the sentences $``c_i < c \wedge c < c_\mu"$ for $i < \beta$, then $T_\beta$ has a mode $A$ of rank type $(\zeta ,\lambda)$ by expanding $V_\zeta$ by $c^A = \beta$ and $P^A = V_\lambda$. This contradicts that $\cL$ is $(\mu,\lambda)$-chain compact for rank type $(\zeta, \lambda)$. 
	\end{proof}
\end{corollary}

\subsection{Compactness for size types}
Instead of rank types, we may also consider \emph{size types}, where for cardinals $\zeta \geq \lambda$ and a vocabulary $\tau$ with a designated unary relation symbol $P \in \tau$, a $\tau$-structure $A$ has \emph{size type} $(\zeta, \lambda)$ if $|A| = \zeta$ and $|P^A| = \lambda$. Then we can define the obvious analogues of the properties considered above, \emph{$(\mu,\lambda)$-chain compactness for size type $(\zeta,\lambda)$} and \emph{$[\mu,\mu,\lambda]$-compactness for size type $(\zeta,\lambda)$}. In Theorem \ref{thm:exacting-compactness}, we can equivalently substitute consideration of size types instead of rank types. Note that being of size type $(\zeta,\lambda)$ is closed under isomorphism, while being of rank type $(\zeta,\lambda)$ is not. As in model theory we are usually considering properties invariant under isomorphism, the characterisation using size types is arguably the model-theoretically cleaner one. We decided to present the version with rank types instead to underline the analogy to rank-Berkeley cardinals. Note that in the choiceless setting, the version with size types does not work, as, e.g., it might be that some $V_\zeta$ does not have a cardinality. 

\section{Topological compactness}\label{sec:topology}
The notion of \emph{$[\kappa, \mu]$-compactness} was first studied in topology by Alexandroff and Urysohn. We thus state our results in terms of compactness principles for topological spaces. In particular, the results show how the topological property leads to violations of the axiom of choice and of $V=\HOD$ if applied to the right spaces. The following is the notion studied in topology.
\begin{definition}
	Let $X$ be a topological space, $\kappa$ and $\mu$ cardinals. Then $X$ is called \emph{$[\kappa, \mu]$-compact} iff for all families $(X_i)_{i < \kappa}$ of closed sets with $\bigcap_{i < \kappa} X_i = \emptyset$ there is $B \in \mathcal P_\mu \kappa$ such that $\bigcap_{i \in B} X_i = \emptyset$. 
\end{definition}
Recall that compactness of a logic is connected to compactness of topological spaces. One (folklore) way of spelling this out is the following. For a logic $\cL$ and a vocabulary $\tau$, let $\equiv_{\cL[\tau]}$ be the equivalence relation on the class of $\tau$-structures where
\[
A \equiv_{\cL[\tau]} B \text{ iff for all } \varphi \in \cL[\tau](A \models_\cL \varphi \text{ iff } B \models_\cL \varphi).
\]
Write $[A]$ for the Scott equivalence class \[[A] = \{B \colon B \equiv_{\cL[\tau]} A \text{ and } \rk(B) \text{ is minimal}\}.\] Let $\Top(\cL[\tau])$ be the topological space on $X(\cL[\tau]) = \{[A] \colon A \text{ is a } \tau \text{-structure} \}$ with the topology which has as a subbasis the following sets $Y_\varphi$, where $\varphi \in \cL[\tau]$:
\[
Y_\varphi = \{[A] \in X(\cL[\tau]) \colon A \models_\cL \varphi \}.
\] 
It is a classic observation that the compactness theorem for first-order logic is equivalent to the statement that for every vocabulary $\tau$, the topological space $\Top(\cL_{\omega \omega}[\tau])$ is compact in the topological sense. Mannila first observed that a similar equivalence transfers to $[\kappa,\mu]$-compactness as introduced by Makowsky and Shelah and topological $[\kappa,\mu]$-compactness as introduced by Alexandroff and Urysohn.
\begin{fact}[Mannila \cite{mannila1983topological}]\label{fact:mannila}
	Let $\cL$ be a logic, $\kappa$ and $\mu$ cardinals. Then $\cL$ is $[\kappa,\mu]$-compact if and only if for every $\tau$, the space $\Top(\cL[\tau])$ is $[\kappa,\mu]$-compact.
\end{fact}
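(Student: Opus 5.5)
We prove the two implications separately, translating between theories and closed sets in the standard way. Fix $\tau$. Every closed set of $\Top(\cL[\tau])$ is an intersection of complements of finite intersections of subbasic sets. Since $\cL$ is closed under boolean connectives, these complements are themselves of the form $Y_\psi$: we have $X(\cL[\tau]) \setminus (Y_{\varphi_1}\cap\dots\cap Y_{\varphi_n}) = Y_{\neg(\varphi_1\wedge\dots\wedge\varphi_n)}$, and $Y_\varphi \cap Y_\psi = Y_{\varphi\wedge\psi}$. Hence the sets $Y_\varphi$ form a basis of both open and closed sets, and every closed set is of the form $\bigcap_{\varphi \in T} Y_\varphi$ for some $T \subseteq \cL[\tau]$. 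The key dictionary is then
\[
\textstyle\bigcap_{\varphi\in T} Y_\varphi \neq \emptyset \iff T \text{ is satisfiable}.
\]
This holds because the map $A \mapsto [A]$ respects $\models_\cL$: the class $[A]$ consists of structures equivalent to $A$, so membership of $[A]$ in $Y_\varphi$ does not depend on the representative.

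For the direction from the topological statement to the logical one, let $T, S \subseteq \cL[\tau]$ with $|S| = \kappa$, and suppose $T\cup S_0$ is satisfiable for every $S_0 \in \mathcal P_\mu S$. List $S = \{\varphi_i \colon i<\kappa\}$ and put $X_i = \bigcap_{\psi\in T} Y_\psi \cap Y_{\varphi_i}$. Each $X_i$ is closed. For $B\in\mathcal P_\mu\kappa$, the intersection $\bigcap_{i\in B} X_i$ is nonempty, since it contains $[A]$ for any model $A$ of $T\cup\{\varphi_i\colon i\in B\}$. (If $B = \emptyset$, read the empty intersection as the whole space; it is then nonempty because $T$ itself is satisfiable.) By $[\kappa,\mu]$-compactness, $\bigcap_{i<\kappa}X_i\neq\emptyset$, and any element gives a model of $T\cup S$.

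For the converse, let $(X_i)_{i<\kappa}$ be closed sets all of whose ${<}\mu$-subfamilies have nonempty intersection. Write each $X_i = \bigcap_{\varphi\in T_i}Y_\varphi$. The difficulty is that the $T_i$ may be large, while the logical notion only restricts $|S| = \kappa$. I would handle this by renaming: use fresh constant-free copies indexed by $i$ if needed, or more simply introduce a new $0$-ary predicate (a proposition letter $Q_i$, or a sentence $\exists x P_i(x)$ with $P_i$ a fresh unary symbol, as in Lemma~\ref{lem:square-chain}). We then set
\[
T = \{\exists x P_i(x) \rightarrow \varphi \colon i<\kappa,\ \varphi\in T_i\}, \qquad S = \{\exists x P_i(x)\colon i<\kappa\}.
\]
For $S_0$ corresponding to $B\in\mathcal P_\mu\kappa$, pick $[A]\in\bigcap_{i\in B}X_i$ and expand $A$ by letting $P_i^A$ be nonempty exactly for $i\in B$. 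This expansion is a model of $T\cup S_0$, using only the isomorphism and reduct properties of a logic. Logical $[\kappa,\mu]$-compactness yields a model of $T\cup S$, whose $\tau$-reduct lies in every $X_i$. This contradicts $\bigcap_{i<\kappa} X_i = \emptyset$, which establishes topological $[\kappa,\mu]$-compactness of $\Top(\cL[\tau])$ (working in the expanded vocabulary and then taking reducts).

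The main obstacle is checking that the space is well defined. This requires minimal-rank representatives so that $X(\cL[\tau])$ is a set, and the reduct/expansion properties of $\models_\cL$ from the standard definition of a logic. A subtlety is that $\kappa$ may be smaller than needed to enumerate $S$ injectively when some $X_i$ coincide. This is harmless: duplicated indices simply give duplicated sentences, and the fresh predicates keep them distinct.
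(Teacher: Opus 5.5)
Your proof is correct and is the standard argument. The paper does not prove this Fact itself but cites Mannila, and your encoding of arbitrary closed sets $X_i=\bigcap_{\varphi\in T_i}Y_\varphi$ via fresh predicates and the sentences $\exists x P_i(x)\rightarrow\varphi$ is exactly the device the paper uses in its proof of Lemma~\ref{lem:square-chain}. One small addition: take $T_i=\{\varphi\in\cL[\tau]\colon X_i\subseteq Y_\varphi\}$ canonically instead of choosing a representation for each $i$, which keeps the argument choice-free, as the paper needs for the ZF variant in Lemma~\ref{lem:topology-logic}.
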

The proof easily carries over to our compactness variants for rank and relation types, but to make it work we need to consider model spaces which restrict attention to models of the correct types. 

For ordinals $\alpha$ and $\beta$, write $\Top^{(\alpha,\beta)}(\cL[\tau])$ for the topological space on the set \[X^{(\alpha, \beta)}(\cL[\tau]) = \{[A] \colon A \text{ is a } \tau \text{-structure of rank type } (\alpha,\beta) \}\] with the topology defined analogously to before.

For a relation $(M,R)$, write $\Top^{(M,R)}(\cL[\tau])$ for the topological space on the set \[X^{(M,R)}(\cL[\tau]) = \{[A] \colon A \text{ is a } \tau \text{-structure of relation type } (M,R) \}\] with the topology defined analogously to before. Then:
\begin{lemma}[ZF]\label{lem:topology-logic}
	Take a logic $\cL$, cardinals $\kappa,\mu$, ordinals $\alpha$ and $\beta$, and a relation $(M,R)$. Then:
	\begin{enumerate} 
	
	\item[(i)] $\cL$ is $[\kappa,\mu]$-compact for rank type $(\alpha,\beta)$ if and only if for all vocabularies $\tau$, the space $\Top^{(\alpha,\beta)}(\cL[\tau])$ is $[\kappa,\mu]$-compact.
	
	\item[(ii)] $\cL$ is $[\kappa,\mu]$-compact for relation type $(M,R)$ if and only if for all vocabularies $\tau$, the space $\Top^{(M,R)}(\cL[\tau])$ is $[\kappa,\mu]$-compact.
	\end{enumerate}
	\begin{proof}
		The proof in \cite[Theorem 1]{mannila1983topological} carries over restricting the rank types or relation types of considered models where appropriate.
	\end{proof}
\end{lemma}
\begin{lemma}\label{lem:topology-logic2}
		Take a logic $\cL$, cardinals $\kappa,\mu$, a cardinal $\lambda\geq\kappa$ such that for vocabularies $\tau$ with $|\tau| \leq \lambda$ also $|\cL[\tau]| \leq \lambda$, and ordinals $\alpha$ and $\beta$. Then $\cL$ is $[\kappa,\mu,\lambda]$-compact for rank type $(\alpha,\beta)$ if and only if for all vocabularies $\tau$ of size $|\tau| \leq \lambda$, the space $\Top^{(\alpha,\beta)}(\cL[\tau])$ is $[\kappa,\mu]$-compact. 
		\begin{proof}
			Again, the proof in \cite[Theorem 1]{mannila1983topological} carries over additionally restricting sizes of theories and vocabularies.
		\end{proof}
\end{lemma}

The following give examples of how the logical compactness characterisations transfer to topological results. All other equivalences proven in this article have similar topological reformulations. 
\begin{corollary}[ZF]\label{cor:rank-Berkeley-topology}
	The following are equivalent for a cardinal $\lambda$:
	\begin{enumerate}
		\item[(1)] $\lambda$ is rank-Berkeley.
		\item[(2)] For every $\alpha < \lambda < \zeta$ there is a cardinal $\alpha < \mu < \lambda$ such that for every vocabulary $\tau$ the space $\Top^{(\zeta,\lambda)}(\cL^2_{\mu \omega}[\tau])$ is $[\mu,\mu]$-compact. 
	\end{enumerate}
	\begin{proof}
		Follows from Theorem \ref{thm:rank-Berkeley} using the equivalence of topological $[\mu,\mu]$-compactness,  $[\mu,\mu]$-compactness for rank type $(\zeta,\lambda$), and $\mu$-chain compactness for rank type $(\zeta, \lambda)$ (Lemmas \ref{lem:square-chain-type} and \ref{lem:topology-logic}).
	\end{proof}
\end{corollary}
\begin{corollary}[ZF]\label{cor:Berkeley-topology}
	The following are equivalent for a cardinal $\lambda$:
	\begin{enumerate}
		\item[(1)] $\lambda$ is Berkeley.
		\item[(2)] For every relation $(M,R)$ there is a cardinal $\alpha < \mu < \lambda$ such that for every vocabulary $\tau$ the space $\Top^{(M,R)}(\cL_{\mu \omega}[\tau])$ is $[\mu,\mu]$-compact. 
	\end{enumerate}
	\begin{proof}
		Follows from Theorem \ref{thm:Berkeley} using the equivalence of topological $[\mu,\mu]$-compactness, $[\mu,\mu]$-compactness for relation type $(\zeta,\lambda$), and $\mu$-chain compactness for relation type $(\zeta, \lambda)$ (Lemmas \ref{lem:square-relation-type} and \ref{lem:topology-logic}).
	\end{proof}
\end{corollary}
\begin{corollary}[ZF]\label{cor:Berkeley-topology2}
	The following are equivalent:
	\begin{enumerate}
		\item[(1)] There is a Berkeley cardinal.
		\item[(2)] There is a cardinal $\lambda$ such that for every relation $(M,R)$ there is a cardinal $\mu < \lambda$ such that for every vocabulary $\tau$ the space $\Top^{(M,R)}(\cL_{\omega \omega}[\tau])$ is $ [\mu,\mu]$-compact.
	\end{enumerate}
	\begin{proof}
		Follows from Corollary \ref{cor:existence-Berkeley} using the equivalence of topological $[\mu,\mu]$-compactness, $[\mu,\mu]$-compactness for relation type $(\zeta,\lambda$), and $\mu$-chain compactness for relation type $(\zeta, \lambda)$ (Lemmas \ref{lem:square-relation-type} and \ref{lem:topology-logic}).
	\end{proof}
\end{corollary}

\begin{corollary}[ZF]\label{cor:club-Berkeley-topology}
	The following are equivalent for a regular cardinal $\lambda$:
	\begin{enumerate}
		\item[(1)] $\lambda$ is club Berkeley.
		\item[(2)] For all clubs $C \subseteq \lambda$ and all relations $(M,R)$ there is $\mu \in C$ such that for every vocabulary $\tau$ the space $\Top^{(M,R)}(\cL_{\mu \omega}[\tau])$ is $[\mu,\mu]$-compact. 
	\end{enumerate}
	\begin{proof}
		Follows from Theorem \ref{thm:club-Berkeley} using the equivalence of topological $[\mu,\mu]$-compactness, $[\mu,\mu]$-compactness for relation type $(\zeta,\lambda$), and $\mu$-chain compactness for relation type $(\zeta, \lambda)$ (Lemmas \ref{lem:square-relation-type} and \ref{lem:topology-logic}).
	\end{proof}
\end{corollary}

\begin{corollary}\label{cor:exacting-topology}
	The following are equivalent for a cardinal $\lambda = \beth_\lambda$:
	\begin{enumerate}
		\item[(1)] $\lambda$ is exacting.
		\item[(2)] For every $\alpha < \lambda < \zeta$ there is a cardinal $\alpha < \mu < \lambda$ such that for every vocabulary $\tau$ of size $|\tau| \leq \lambda$ the space $\Top^{(\zeta,\lambda)}(\cL^2_{\mu \omega}[\tau])$ is $[\mu,\mu]$-compact.
	\end{enumerate}
	\begin{proof}
		Follows from Theorem \ref{thm:exacting-compactness} using the equivalence of topological $[\mu,\mu]$-compactness,  $[\mu,\mu,\lambda]$-compactness for rank type $(\zeta,\lambda$), and $(\mu,\lambda)$-chain compactness for rank type $(\zeta, \lambda)$ (Lemmas \ref{lem:square-chain-type-size} and \ref{lem:topology-logic2}).
	\end{proof}
\end{corollary}

\section{Open questions}
The characterisation of exacting cardinals in terms of compactness derives from the one of rank-Berkeleyness by restricting the theory size. We can consider an analogous weakening of the characterisation of Berkeleyness.
\begin{definition}
	Let $\lambda$ and $\mu$ be cardinals, $(M,R)$ a relation. Say that a logic $\cL$ is \emph{$(\mu,\lambda)$-chain compact for relation type $(M,R)$} if for every theory $T \subseteq \cL$ with $|T| = \lambda$ and which can be written as an increasing union $T = \bigcup_{\beta < \mu} T_\beta$ where each $T_\beta$ has a model of relation type $(M,R)$ there is a model of $T$ of relation type $(M,R)$.
\end{definition}
\begin{question}
	 What is the large cardinal strength of the existence of a cardinal $\lambda$ such that for every $\alpha < \lambda$ and every relation $(M,R)$ there is a cardinal $\alpha < \mu < \lambda$ such that $\cL_{\mu \omega}$ (or some other logic) is $\mu$-chain compact for relation type $(M,R)$? Is such a cardinal consistent with choice?
\end{question}
If the answer to the last question is positive, such a cardinal might be a Berkeley-analogue consistent with choice, similar to how exacting cardinals are an analogue of rank-Berkeleyness consistent with choice.
\begin{question}
	In the characterisation of club Berkeleyness (Theorem \ref{thm:club-Berkeley}), can we equivalently consider $\sln_{\mu \omega}$?
\end{question}

Next to exacting cardinals, Aguilera, Bagaria, and Lücke \cite{aguilera2024large} also introduced  \emph{ultraexacting} cardinals. 
\begin{question}
	What is a compactness characterisation of ultraexacting cardinals?
\end{question}

	\bibliography{bibliography}{}
	\bibliographystyle{abbrv}

\end{document}